\newtheorem{theorem}{Theorem}
\newtheorem{lemma}[theorem]{Lemma}
\newtheorem{proposition}[theorem]{Proposition}
\newenvironment{proof}[1][Proof]{\noindent\textbf{#1.} }{\ \rule{0.5em}{0.5em}}
\begin{document}

\title{Local behaviour of the remainder in Renewal theory}
\author{Doney, Ron}

\begin{abstract}
Several terms in an asymptotic estimate for the renewal mass function in a
discrete random walk which has positive mean and regularly varying
right-hand tail are given. Similar results are given for the renewal density
in the absolutely continuous case.
\end{abstract}

\maketitle

\section{Introduction}

The context is that $S=(S_{n},n\geq 0)$ is a random walk on the line, with $%
S_{0}=0$ and $S_{n}=\tsum\nolimits_{1}^{n}X_{r}$ for $n\geq 1,$ where $X_{r}$
are i.i.d. with mass function $p$ or density function $f.$ The object of our
study is the renewal mass function, (respectively density function), which
we denote by $u,$ viz%
\begin{eqnarray*}
u_{m} &=&\sum_{0}^{\infty }p_{n}(m),\text{ with }p_{n}(m)=\text{ }P(S_{n}=m),%
\text{ }m=0,\pm 1,\pm 2, \\
\text{or }u(x) &=&\sum_{0}^{\infty }f_{n}(x),\text{ with }f_{n}\text{ the
density function of }S_{n}.
\end{eqnarray*}%
We assume that $\mu =ES_{1}\in (0,\infty ),$ $ES_{1}^{2}=\infty $ and that
for some $\alpha \in (1,2)$

\begin{equation}
\overline{F}(n):=P(S_{1}>n)\in RV(-\alpha ).  \label{r1}
\end{equation}%
For the most part we will assume also that the lefthand tail is neglible in
comparision with the righthand tail, viz%
\begin{equation}
F(-n):=P(S_{1}\leq -n)=o(\overline{F}(n))\text{ as }n\rightarrow \infty .
\label{r0}
\end{equation}%
Our objective is to give the most detailed possible description of the
asymptotic behaviour of the remainder%
\begin{equation}
d_{n}:=u_{n}-\frac{1}{\mu }\text{ or }d(x):=u(x)-\frac{1}{\mu },  \label{r2}
\end{equation}%
as $n,x\rightarrow \infty ,$ without making any additional restrictions. The
most relevant known results are in \cite{ro}; in the context of an
integer-valued renewal process but with (\ref{r1}) replaced by the
assumption that $\overline{F}(n)$ is either $O(r_{n})$ or $o(r_{n}),$ where $%
r_{n}\in RV(-\alpha ),$ first order $O$ and $o$ estimates for $d_{n}$ were
given. The method used therein involves Banach Algebra, and cannot be used
to deal with exact asymptotics. The method we use is technically simpler,
and could be adapted to give essentially all of the results in \cite{ro}.

\section{The lattice case}

Assume that we are not in the renewal situation, so that $\mu
_{-}:=\tsum\nolimits_{-\infty }^{-1}mp_{m}<0,$ and with $\mu
_{+}:=\tsum\nolimits_{1}^{\infty }mp_{m}$ introduce two probability mass
functions%
\begin{equation*}
\phi _{n}^{+}=\left\{ 
\begin{array}{ccc}
\mu _{+}^{-1}\overline{F}(n) & \text{for} & n>0, \\ 
0 & \text{for} & n\leq 0,%
\end{array}%
\right.
\end{equation*}%
and%
\begin{equation*}
\phi _{n}^{-}=\left\{ 
\begin{array}{ccc}
-\mu _{-}^{-1}F(n) & \text{for} & n<0, \\ 
0 & \text{for} & n\geq 0,%
\end{array}%
\right.
\end{equation*}%
then we put%
\begin{equation*}
\phi _{n}=\frac{\mu _{+}\phi _{n}^{+}}{\mu }+\frac{\mu _{-}\phi _{n}^{-}}{%
\mu },n=0,\pm 1,\pm 2,\cdots .
\end{equation*}%
Note that $\phi $ is negative on the negative axis, but $\tsum\nolimits_{-%
\infty }^{\infty }\phi _{n}=1.$ Moreover, a careful summation by parts shows
that%
\begin{equation}
\hat{\phi}(t):=\tsum\nolimits_{-\infty }^{\infty }\phi _{n}e^{int}=\frac{1-%
\hat{p}(t)}{\mu (1-e^{it})}.  \label{r11}
\end{equation}%
So we can formally write%
\begin{eqnarray*}
\hat{u}(t) &:&=\tsum\nolimits_{-\infty }^{\infty }u_{n}e^{int}=\frac{1}{1-%
\hat{p}(t)}=\frac{1}{\mu (1-e^{it})\hat{\phi}(t)} \\
&=&\frac{1}{\mu (1-e^{it})}+\frac{1-\hat{\phi}(t)}{\mu (1-e^{it})}+\frac{(1-%
\hat{\phi}(t))^{2}}{\mu (1-e^{it})}+\cdots
\end{eqnarray*}%
and undoing the transform suggests the expansion%
\begin{equation}
u_{n}=\frac{1}{\mu }\left( \boldsymbol{1}_{\{n\geq 0\}}+\overline{\Phi }%
_{1,n}+\overline{\Phi }_{2,n}+\cdots \right) ,  \label{r22}
\end{equation}%
where%
\begin{equation*}
\overline{\Phi }_{1,n}=\left\{ 
\begin{array}{ccc}
\tsum\nolimits_{n+1}^{\infty }\phi _{r} & \text{for} & n\geq 0, \\ 
-\tsum\nolimits_{-\infty }^{n}\phi _{r} & \text{for} & n\,<0.%
\end{array}%
\right. \text{ and }\overline{\Phi }_{k+1,n}=\overline{\Phi }_{k,n}-(%
\overline{\Phi }\ast \phi )_{n},\text{ }k\geq 1.
\end{equation*}%
(The fact that $\frac{1-\varphi (t)}{1-e^{it}}=\tsum\nolimits_{-\infty
}^{\infty }\overline{\Phi }_{1,n}e^{int}$ can be checked by summation by
parts.) We will investigate how far the expansion (\ref{r22}) is valid and
give the asymptotic behaviour of its terms under the standing assumptions
that $ES_{1}=\mu \in (0,\infty ),$ $ES_{1}^{2}=\infty ,$ and (\ref{r1}) and (%
\ref{r0}) hold. Clearly then $\overline{\Phi }_{1,n}\backsim n\overline{F}%
(n)/(\beta \mu _{+})\in RV(-\beta )$ where $\beta =\alpha -1.$

\begin{theorem}
\label{D}(i) Under these assumptions if $k\geq 2$ and $k\beta <\alpha $ it
holds that as $n\rightarrow \infty $ 
\begin{equation}
\overline{\Phi }_{k,n}\backsim c(k,\beta )\left( \overline{\Phi }%
_{1,n}\right) ^{k},  \label{r51}
\end{equation}%
where%
\begin{equation}
c(k,\beta )=\frac{(1-k\beta )\Gamma (1-\beta )^{k}}{\Gamma (2-k\beta )}%
=\left\{ 
\begin{array}{ccc}
\frac{\Gamma (1-\beta )^{k}}{\Gamma (1-k\beta )} & \text{if} & k\beta \neq 1,
\\ 
0 & \text{if} & k\beta =1.%
\end{array}%
\right.  \label{r53}
\end{equation}

(ii) If $r^{\ast }=\max (k:k\beta <\alpha )$ write 
\begin{equation*}
u_{n}=\frac{1}{\mu }\left( \boldsymbol{1}_{\{n\geq 0\}}+\overline{\Phi }%
_{1,n}+\cdots +\overline{\Phi }_{r^{\ast },n}+E^{\ast }\right) .
\end{equation*}%
Then 
\begin{equation}
E^{\ast }=o(\left( \overline{\Phi }_{1,n}\right) ^{r^{\ast }})\text{ as }%
n\rightarrow \infty .  \label{r26}
\end{equation}

(iii) If $\alpha =2$ define for $n\geq 1$ a slowly varying function by $\mu
_{2}(n)=\sum_{1}^{n}r\phi _{r}$: then%
\begin{equation*}
u_{n}-\frac{1}{\mu }-\frac{\overline{\Phi }_{1,n}}{\mu }\backsim \frac{-\phi
_{n}\mu _{2}(n)}{\mu }\text{ as }n\rightarrow \infty .
\end{equation*}
\end{theorem}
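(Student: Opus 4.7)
The plan rests on two linked representations: the recursion
\[
\overline{\Phi}_{k+1,n}=\sum_m\bigl[\overline{\Phi}_{k,n}-\overline{\Phi}_{k,n-m}\bigr]\phi_m
\]
(using $\sum_m\phi_m=1$), and the Fourier identity $\sum_n\overline{\Phi}_{k,n}e^{int}=(1-\hat\phi(t))^k/(1-e^{it})$. The first is suited to direct tail analysis; the second to a Karamata--Tauberian inversion. A key auxiliary fact, read off from $\overline{\Phi}_{1,n}\sim n\overline{F}(n)/(\beta\mu_+)\in RV(-\beta)$ via Karamata's Abelian theorem, is that $1-\hat\phi(t)$ is of order $(-it)^\beta L(1/|t|)$ as $t\to 0$, with leading constant proportional to $\Gamma(1-\beta)$, where $L(n):=n^\alpha\overline{F}(n)$ is slowly varying.

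For part (i), I would argue via the transform. Raising to the $k$-th power and dividing by $1-e^{it}\sim -it$ produces local behaviour of order $(-it)^{k\beta-1}L(1/|t|)^k$ near zero, and Karamata--Tauberian inversion---valid because $k\beta<\alpha$ keeps the exponent in the admissible range---yields $\overline{\Phi}_{k,n}\sim C_kL(n)^kn^{-k\beta}/\Gamma(1-k\beta)$ for an explicit constant $C_k$ containing a factor $[\Gamma(1-\beta)]^k$. Dividing by $(\overline{\Phi}_{1,n})^k$ the prefactor $1/(\beta\mu_+)^k$ cancels, leaving exactly $c(k,\beta)=\Gamma(1-\beta)^k/\Gamma(1-k\beta)$. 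The degenerate case $k\beta=1$ produces $c(k,\beta)=0$ from the pole-zero cancellation built into the alternative form $(1-k\beta)\Gamma(1-\beta)^k/\Gamma(2-k\beta)$, reflecting a vanishing leading term that requires an extra slowly varying correction---already visible in (iii). The two-sided nature of the Fourier series (projection onto $n\geq 0$) and the local integrability of $1/\hat\phi$ away from $t=0$ (standard for non-degenerate random walks) are routine technicalities.

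For part (ii), the tail of the geometric series gives
\[
\sum_n(\mu E^\ast_n)e^{int}=\frac{(1-\hat\phi(t))^{r^\ast+1}}{(1-e^{it})\hat\phi(t)},
\]
whose local behaviour near $t=0$ is of order $|t|^{(r^\ast+1)\beta-1}L(1/|t|)^{r^\ast+1}$ with $(r^\ast+1)\beta\geq\alpha>r^\ast\beta$. Since $\hat\phi(t)\ne 0$ for $t\ne 0$, Fourier inversion (or a one-sided Tauberian bound) then produces $E^\ast=o(n^{-r^\ast\beta}L(n)^{r^\ast})=o((\overline{\Phi}_{1,n})^{r^\ast})$.

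Part (iii) is the boundary $\alpha=2$, $\beta=1$, where $c(2,1)=0$, so direct analysis is needed because the generic leading term vanishes. Using the recursion,
\[
\overline{\Phi}_{2,n}=-\sum_{m\geq 1}\phi_m\sum_{r=n-m+1}^n\phi_r+R_n,
\]
where $R_n$ gathers contributions from $m<0$ and $m>n$ that are negligible by (\ref{r0}). The dominant region is $r$ close to $n$: since $\phi_r\sim\phi_n$ uniformly for $r=n-o(n)$, the inner sum is asymptotically $m\phi_n$, leaving $-\phi_n\sum_{m=1}^n m\phi_m=-\phi_n\mu_2(n)$ to leading order. That $\mu_2$ is slowly varying follows from $r\phi_r\in RV(-1)$ and Karamata, and the higher terms $\overline{\Phi}_{k,n}$ for $k\geq 3$ can be shown to be $o(\phi_n\mu_2(n))$ by adapting the transform bound from (ii) to $\beta=1$. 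The main obstacle throughout the theorem is the uniformity of asymptotics under the convolution sums for $m$ of the same order as $n$; Potter bounds and (\ref{r0}) provide the needed domination, and the $\varepsilon$-control of the replacement $\phi_r\to\phi_n$ in (iii) is the most delicate estimate.
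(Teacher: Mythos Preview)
Your outline for part (iii) is close to the paper's own argument, but parts (i) and (ii) contain a real gap: the appeal to ``Karamata--Tauberian inversion'' does not apply here in any standard form, and what you dismiss as ``routine technicalities'' is in fact where almost all the work lies.

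For (i), the object $\sum_n \overline{\Phi}_{k,n}e^{int}$ is a two-sided Fourier series whose coefficients are not known to be monotone or even nonnegative (recall $\phi_n<0$ for $n<0$), so neither the classical Hardy--Littlewood--Karamata theorem for power series nor the Laplace-transform Tauberian theorems are available. The paper does not invoke any Tauberian theorem; it performs a direct Fourier inversion, and this forces a genuine case split. When $k\beta\le 1$ the integrand $\psi_k(t)=(1-\hat\phi(t))^k/(1-e^{it})$ is locally integrable and one can estimate $\int_0^{K\pi/n}$ by scaling and $\int_{K\pi/n}^{\pi}$ by a single integration by parts. When $k\beta\in(1,\alpha)$ the transform vanishes at $t=0$, an initial integration by parts is needed, and the remaining tail integral of $\psi_k'$ is controlled by a differencing trick comparing $\varphi_1(u)$ with $\varphi_1(u+\pi/n)$, which in turn relies on uniform estimates such as $\sup_t|\hat\phi(t)-\hat\phi(t+\varepsilon)|=O(r(\varepsilon))$ and $|\hat\phi(t)-\hat\phi(t+\varepsilon)|=O(\varepsilon r(t)/t)$ for $t\ge 2\varepsilon$. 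None of this is captured by ``the exponent is in the admissible range.'' The boundary case $k\beta=1$ (where $c(k,\beta)=0$) is also not a mere pole-zero cancellation: a Tauberian theorem, even if one applied, would yield nothing when the putative limit is zero.

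For (ii), the factor $1/\hat\phi(t)$ in your transform for $E^\ast$ is a global object on the circle, not just a regularly varying germ at $0$, and a Tauberian or Riemann--Lebesgue argument alone will not produce the sharp $o((\overline{\Phi}_{1,n})^{r^\ast})$. The paper's key idea, which your sketch is missing, is to first obtain precise pointwise asymptotics for $\Delta_n:=u_{n-1}-u_n$: one shows $n\Delta_n\to 0$ by Riemann--Lebesgue, then upgrades this to $\Delta_n\sim \phi_n^+/\mu$ via the convolution identity $n\Delta_n/\mu=\sum_m m\phi_m\Delta_{n-m}^{(2)}$. With $\Delta$ thus controlled, the representation $E_{k,n}=-\sum_m\Delta_m\overline{\Phi}_{k+1,n-m}$ turns (ii) into a routine dominated-convergence argument.
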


Since $\psi _{k}(t):=E(e^{it\overline{\Phi }_{k,n}})=(1-)^{k}/(1-e^{it})$
our first step is to note that the asymptotic behaviour of both $1-\hat{\phi}%
(t)$ and $\hat{\phi}^{\prime }(t)$ follow from known results.

\begin{lemma}
Assume that for some $\alpha \in (1,2)$ we have $\overline{F}\in RV(-\alpha
) $ and $F(-x)/\overline{F}(x)\rightarrow 0$ as $x\rightarrow \infty ,$ so
that as $n\rightarrow \infty $ 
\begin{equation*}
\overline{\Phi }_{1,n}\backsim \frac{n\overline{F}(n)}{\beta \mu }\text{ and 
}\overline{\Phi }_{1,-n}=o(\overline{\Phi }_{1,n}).
\end{equation*}%
Define $r(t)=\overline{\Phi }_{1,1/t},$ then as $t\downarrow 0,$

(i) 
\begin{equation}
1-\hat{\phi}(t)\backsim \Gamma (1-\beta )e^{-i\pi \beta /2}r(t),\text{ }
\label{r32}
\end{equation}%
and

(ii)%
\begin{equation}
\hat{\phi}^{\prime }(t)\backsim \beta \Gamma (1-\beta )e^{-i\pi \beta
/2}t^{-1}r(t).  \label{r33}
\end{equation}
\end{lemma}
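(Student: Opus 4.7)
The plan is to exploit the fact that although $\phi$ is a signed sequence, it is dominated (as far as its Fourier transform near $0$ is concerned) by the probability mass function $\phi^{+}$, whose right tail $\bar{\Phi}^{+}_{1,n}\sim n\bar{F}(n)/(\beta\mu_{+})$ lies in $RV(-\beta)$ with $\beta=\alpha-1\in(0,1)$. Using the convex combination $\phi=(\mu_{+}/\mu)\phi^{+}+(\mu_{-}/\mu)\phi^{-}$, both $1-\hat{\phi}(t)$ and $\hat{\phi}'(t)$ split into contributions from $\phi^{+}$ and $\phi^{-}$, and the hypothesis $F(-x)/\bar{F}(x)\to 0$ makes the $\phi^{-}$ contribution negligible for the asymptotics sought, so it suffices to handle $\phi^{+}$.

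For (i), I would apply the classical Abelian theorem for characteristic functions of distributions with regularly varying tails (Bingham--Goldie--Teugels, Theorem 8.1.10, or Feller Vol.~II, XVII.5). Concretely, summation by parts gives
\[
1-\hat{\phi}^{+}(t)=\sum_{n\geq 1}(1-e^{int})(\bar{\Phi}^{+}_{1,n-1}-\bar{\Phi}^{+}_{1,n}) = it\sum_{n\geq 0}\bar{\Phi}^{+}_{1,n}e^{int}+o(\cdot),
\]
and the rescaling $n=y/t$ together with the uniform convergence theorem for regular variation and dominated convergence turns the sum into $\Gamma(1-\beta)e^{-i\pi\beta/2}\bar{\Phi}^{+}_{1,1/t}$. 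Pulling back the factor $\mu_{+}/\mu$ and using $\bar{\Phi}_{1,n}\sim(\mu_{+}/\mu)\bar{\Phi}^{+}_{1,n}$ identifies the limit with $\Gamma(1-\beta)e^{-i\pi\beta/2}r(t)$.

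For (ii) I would like to differentiate (i) term-by-term, but asymptotic equivalence need not survive differentiation, so the safest route is to apply the same Abelian machinery directly to $\hat{\phi}'(t)=i\sum_{n}n\phi_{n}e^{int}$. A summation by parts now converts this into a Fourier transform of the tail $\bar{\Phi}_{1,n}$ itself (a function in $RV(-\beta)$), and the analogous scaling argument produces a $t^{\beta-1}L(1/t)$ rate; matching constants yields $\beta\Gamma(1-\beta)e^{-i\pi\beta/2}t^{-1}r(t)$. As a consistency check, the monotone density theorem gives $r'(t)\sim\beta r(t)/t$, so this is precisely what a (justified) differentiation of (i) would predict.

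The main technical obstacle I expect is in step (ii): establishing that the remainder in the Abelian expansion for $\hat{\phi}'$ is truly of smaller order than the main term, which requires controlling both the small-$n$ (analytic) part of the sum and the tail beyond $n\sim 1/t$ where the regularly varying asymptotic only holds in a uniform-on-compacts sense. The $\phi^{-}$-contribution, though negligible in (i), needs rechecking here since weighting by $n$ magnifies the negative tail; the hypothesis $F(-x)=o(\bar{F}(x))$ combined with Karamata's theorem (applied separately to the positive and negative tails of $n\phi_{n}$) is what makes this last step go through.
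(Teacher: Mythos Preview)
Your approach is correct but differs from the paper's. The paper does not decompose $\phi$ into $\phi^{+}$ and $\phi^{-}$ at all; instead it uses the algebraic identity
\[
\hat{\phi}(t)=\frac{1-\hat{p}(t)}{\mu(1-e^{it})}
\]
(equation (\ref{r11}) in the paper) to rewrite $1-\hat{\phi}(t)$ and $\hat{\phi}'(t)$ directly in terms of $1-\hat{p}(t)+i\mu t$ and $\hat{p}'(t)-i\mu$, and then quotes Pitman's estimates for these two quantities. The tail-balance hypothesis $F(-x)=o(\overline{F}(x))$ is absorbed once, in Pitman's statement, and the constant manipulation reduces to the reflection formula $\pi/(\Gamma(\beta)\sin\pi\beta)=\Gamma(1-\beta)$. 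Your route---applying the Abelian theorem of BGT/Feller directly to the one-sided probability mass function $\phi^{+}$ and then separately bounding the $\phi^{-}$ contribution---reaches the same conclusion and is arguably more transparent about \emph{why} the constant $\Gamma(1-\beta)e^{-i\pi\beta/2}$ appears, but it costs you the extra bookkeeping of checking that the left-tail terms are negligible in both (i) and (ii), and in (ii) you must invoke the derivative version of the Abelian theorem (which is precisely what Pitman supplies) rather than the tail version. The paper's use of (\ref{r11}) is the shortcut that collapses both steps into a single citation.
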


\begin{proof}
Results in \cite{P} tell us that as $t\downarrow 0$ 
\begin{equation*}
\,1-\hat{p}(t)+i\mu t\backsim \frac{\pi e^{-i\pi \alpha /2}\overline{F}(1/t)%
}{\Gamma (\alpha )\sin \alpha \pi }\text{ and }\hat{p}^{\prime }(t)-i\mu
\backsim \alpha \frac{\pi e^{-i\pi \alpha /2}\overline{F}(1/t)}{t\Gamma
(\alpha )\sin \alpha \pi }.
\end{equation*}%
Then (\ref{r11}) gives%
\begin{eqnarray*}
1-\hat{\phi}(t) &=&\frac{\hat{p}(t)-1+\mu (1-e^{it})}{\mu (1-e^{it})} \\
&=&\frac{\hat{p}(t)-1-it\mu +O(t^{2})}{\mu (1-e^{it})}\backsim \frac{-i\pi
e^{-i\pi \alpha /2}\overline{F}(1/t)}{\mu \Gamma (\alpha )t\sin \alpha \pi }
\\
&=&\frac{i\pi e^{-i\pi \alpha /2}\overline{F}(1/t)}{\mu \beta \Gamma (\beta
)t\sin \beta \pi }=\frac{\pi e^{-i\pi \beta /2}\overline{F}(1/t)}{\mu \beta
\Gamma (\beta )t\sin \beta \pi }=\frac{\Gamma (1-\beta )e^{-i\pi \beta
/2}r(t)}{\mu }
\end{eqnarray*}%
Note that we here we have used the identity%
\begin{equation}
\frac{\pi }{\Gamma (\beta )\sin \pi \beta }=\Gamma (1-\beta ).  \label{r36}
\end{equation}%
Differentiating (\ref{r11}) and using the second estimate gives (ii).
\end{proof}

We split the proof into 2 cases

\begin{proposition}[A]
Suppose that $k\geq 2$ and $k\beta \leq 1:$ then it holds that as $%
n\rightarrow \infty $ 
\begin{equation}
\overline{\Phi }_{k,n}\backsim c(k,\beta )\left( \overline{\Phi }%
_{1,n}\right) ^{k}.  \label{r31}
\end{equation}
\end{proposition}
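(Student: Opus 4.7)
My approach is Fourier inversion. Since $\overline{\Phi}_{k,n}$ is real with Fourier series
\[
\psi_k(t)=\sum_{n\in\mathbb{Z}}\overline{\Phi}_{k,n}e^{int}=\frac{(1-\hat{\phi}(t))^k}{1-e^{it}},
\]
we have
\[
\overline{\Phi}_{k,n}=\frac{1}{\pi}\operatorname{Re}\int_{0}^{\pi}\psi_k(t)e^{-int}\,dt.
\]
I would extract the leading contribution from a neighbourhood of $t=0$, where the Lemma pins down $1-\hat{\phi}(t)$, and show that the complementary interval $\epsilon\leq|t|\leq\pi$ contributes only a lower-order remainder.

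For the main term, Lemma (i) gives $\psi_k(t)\sim[\Gamma(1-\beta)e^{-i\pi\beta/2}]^{k}r(t)^{k}/(1-e^{it})$ as $t\downarrow 0$. Substituting $t=s/n$, the uniform convergence theorem for regularly varying functions yields $r(s/n)^{k}\sim r(1/n)^{k}s^{k\beta}$ locally uniformly in $s>0$, while $(1-e^{is/n})^{-1}\sim n/(-is)$. Formally passing to the limit in the rescaled integral gives
\[
\overline{\Phi}_{k,n}\sim\frac{\Gamma(1-\beta)^{k}r(1/n)^{k}}{\pi}\operatorname{Re}\Bigl[ie^{-ik\pi\beta/2}\int_{0}^{\infty}s^{k\beta-1}e^{-is}\,ds\Bigr].
\]
Evaluating the Mellin integral as $\Gamma(k\beta)e^{-ik\pi\beta/2}$ and applying the reflection identity $\Gamma(k\beta)\Gamma(1-k\beta)=\pi/\sin(\pi k\beta)$, the same one invoked via (\ref{r36}) in the Lemma, collapses the constant to $\Gamma(1-\beta)^{k}/\Gamma(1-k\beta)=c(k,\beta)$; since $r(1/n)=\overline{\Phi}_{1,n}$, this yields the claim.

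To make this rigorous I split the integral at $|t|=\epsilon$. On $|t|\leq\epsilon$, Potter's inequalities on the slowly varying part of $r$ supply an integrable dominant in $s$ that validates dominated convergence after the substitution $t=s/n$; the condition $k\beta\leq 1$ ensures $s^{k\beta-1}$ is locally integrable at $0$ and the oscillatory factor $e^{-is}$ gives conditional convergence at $\infty$. On $\epsilon\leq|t|\leq\pi$ the integrand is bounded and reasonably regular, and standard Fourier tail estimates (via integration by parts or Riemann--Lebesgue with a rate) give a remainder of smaller order than $\overline{\Phi}_{1,n}^{k}$.

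The main obstacle is the boundary case $k\beta=1$, where $c(k,\beta)=0$ (because of the $\Gamma(1-k\beta)^{-1}$ factor) and the Mellin integral $\int_{0}^{\infty}s^{k\beta-1}e^{-is}\,ds=\int_{0}^{\infty}e^{-is}\,ds$ ceases to be absolutely convergent. I would handle this by interpreting the integral in the Abel sense, or equivalently integrating by parts once in $t$ using the derivative estimate (\ref{r33}) for $\hat{\phi}^{\prime}$ to supply the missing $t^{-1}$ regularity, and conclude that the real part vanishes to leading order, giving $\overline{\Phi}_{k,n}=o(\overline{\Phi}_{1,n}^{k})$ as claimed. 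The tail estimate also becomes tight at this boundary and must be sharpened using the same derivative bound.
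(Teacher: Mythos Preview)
Your approach is the same as the paper's---Fourier inversion, rescale near $t=0$, identify a Mellin integral, estimate the complementary piece---but there is a real gap in the justification, and the paper's organisation of the split is what closes it.

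The problem is your appeal to dominated convergence on $[0,\epsilon]$. After the substitution $t=s/n$ the range becomes $s\in[0,n\epsilon]$, and Potter's bounds on $r(s/n)^k/r(1/n)^k$ give a majorant of order $s^{k\beta-1+\delta}$ for large $s$, which is \emph{not} integrable on $[0,\infty)$ when $k\beta\leq 1$. Conditional convergence of $\int_0^\infty s^{k\beta-1}e^{-is}\,ds$ is not enough to pass the limit $n\to\infty$ through the integral, and in the boundary case $k\beta=1$ even this conditional convergence fails. So the ``integrable dominant'' you invoke does not exist, except locally at $s=0$.

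The paper avoids this by splitting not at a fixed $\epsilon$ but at $K\varepsilon$ with $\varepsilon=\pi/n$, so that after rescaling the inner integral runs only over $[0,K\pi]$---a fixed finite interval where dominated convergence is unproblematic. The outer piece $\int_{K\varepsilon}^{\pi}$ is then handled by one integration by parts, using the derivative estimate $\psi_k'(t)\sim -\gamma(c_1+ic_2)\rho(t)/t$ that follows from (\ref{r33}); this converts the tail into an absolutely convergent integral of order $\int_{K\pi}^{\infty}v^{k\beta-2}\,dv$ plus a boundary term of order $(K\pi)^{k\beta-1}$, both of which vanish as $K\to\infty$ after $n\to\infty$. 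This single integration by parts is exactly the device you propose only as a patch for $k\beta=1$; in fact it is needed, and suffices, uniformly across the whole range $k\beta\leq 1$, and it makes the boundary case fall out with no special pleading (the constant $\Gamma(k\beta)\sin k\beta\pi$ simply vanishes at $k\beta=1$).
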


\begin{proof}
\ Note first that the asymptotic behaviour of $\psi _{k}$ and $\psi
_{k}^{\prime }$follow from (\ref{r32}) and (\ref{r33}), and we see that if
we set $\rho (t)=r(t)^{k}/t$ then $\rho \in RV(-\gamma )$ at zero$,$ with $%
\gamma =1-k\beta $ and as $t\downarrow 0$ 
\begin{equation}
\psi _{k}(t)\backsim i(c_{1}+ic_{2})\rho (t),\text{ and }\psi _{k}^{\prime
}(t)\backsim -i(c_{1}+ic_{2})\gamma \rho (t)/t,  \label{r35}
\end{equation}%
where, from (\ref{r32})%
\begin{equation*}
c_{1}+ic_{2}=\Gamma (1-\beta )^{k}e^{-i\pi k\beta /2}.
\end{equation*}%
Note that $\func{Re}\psi _{k}$ is even and $\func{Im}\psi _{k}$ is odd, so
that%
\begin{equation*}
\overline{\Phi }_{k,n}=\frac{1}{\pi }\int_{0}^{\pi }\left( \func{Re}\psi
_{k}(u)\cos nu+\func{Im}\psi _{k}(u)\sin nu\right) du
\end{equation*}%
Writing $n^{-1}\pi =\varepsilon $ we see that for any fixed $K,$ as $%
n\rightarrow \infty $%
\begin{eqnarray*}
&&\func{Re}\frac{1}{\pi }\int_{0}^{K\varepsilon }\psi _{k}(u)e^{-inu}du \\
&=&\frac{1}{n\pi }\int_{0}^{K\pi }\left( \func{Re}\psi _{k}(v/n)\cos v\pi +%
\func{Im}\psi _{k}(v/n)\sin v\pi \right) dv \\
&\backsim &\frac{\Gamma (1-\beta )^{k}}{n\pi }\rho (\frac{1}{n}%
)\int_{0}^{K\pi }v^{-\gamma }\left( \cos v\pi \sin \pi k\beta /2+\sin v\pi
\cos \pi k\beta /2\right) dv.
\end{eqnarray*}%
Also integration by parts gives%
\begin{eqnarray*}
&&\int_{|u|>K\varepsilon }\psi _{k}(u)e^{-inu}du \\
&=&\frac{1}{-in}\left\{ \psi _{k}(K\varepsilon )e^{-iK\pi }-\psi
_{k}(-K\varepsilon )e^{iK\pi }-\int_{|u|>K\varepsilon }\psi _{k}^{\prime
}(u)e^{-inu}du\right\}  \\
&=&\frac{1}{-in}\left\{ \psi _{k}(K\pi /n)e^{-iK\pi }-\psi _{k}(-K\pi
/n)e^{iK\pi }+\frac{1}{in}\int_{|u|>K\pi }\psi _{k}^{\prime
}(v/n)e^{-iv}dv\right\}  \\
&\backsim &\frac{\psi _{k}(1/n)}{-in}O\left( \int_{|u|>K\pi }v^{\gamma
-2}e^{-iv}dv+(K\pi )^{\gamma -1}\right) ,
\end{eqnarray*}%
and letting $n\rightarrow \infty $ then $K\rightarrow \infty $ we see that%
\begin{eqnarray*}
\overline{\Phi }_{k,n} &\backsim &\frac{\Gamma (1-\beta )^{k}}{\pi n}\rho (%
\frac{1}{n})\int_{0}^{\infty }v^{-\gamma }\left( \cos v\pi \sin \pi k\beta
/2+\sin v\pi \cos \pi k\beta /2\right) dv \\
&=&\frac{2\Gamma (1-\beta )^{k}}{\pi n}\rho (\frac{1}{n})\Gamma (k\beta
)\Gamma (1-\beta )^{k}\cos \frac{k\beta \pi }{2}\sin \frac{k\beta \pi }{2} \\
&=&\frac{1}{\pi n}\rho (\frac{1}{n})\Gamma (k\beta )\Gamma (1-\beta
)^{k}\sin k\beta \pi  \\
&=&\left\{ 
\begin{array}{ccc}
(\overline{\Phi }_{1,n})^{k}\frac{\Gamma (1-\beta )^{k}}{\Gamma (1-k\beta )}
& \text{if} & k\beta \neq 1, \\ 
0 & \text{if} & k\beta =1.%
\end{array}%
\right. .
\end{eqnarray*}%
(We have used the identity (\ref{r36}) with $\beta $ replaced by $k\beta .)$
\end{proof}

\begin{proposition}
If now $k\beta \in (1,\alpha )$ the result (\ref{r31}) also holds.
\end{proposition}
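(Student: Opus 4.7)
The plan is to adapt the argument of Proposition A, beginning with one additional integration by parts that exploits the continuity of $\psi_{k}$ at the origin. By (\ref{r35}), $\psi_{k}(t)\sim i(c_{1}+ic_{2})\rho(t)$ with $\rho\in RV(k\beta-1)$ at $0$; since now $k\beta>1$, $\psi_{k}$ vanishes continuously at $0$ and lies in $L^{1}([-\pi,\pi])$. Fourier inversion combined with one integration by parts, whose boundary contributions at $\pm\pi$ vanish by periodicity, yields
\[
\overline{\Phi}_{k,n}=\frac{1}{2\pi in}\int_{-\pi}^{\pi}\psi_{k}'(t)e^{-int}\,dt,
\]
and because $\psi_{k}'(t)\sim Ct^{k\beta-2}L(1/|t|)^{k}$ with $k\beta-2>-1$, this integral is still absolutely convergent.

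I would then mimic the splitting of Proposition A, cutting at $|t|=K\varepsilon$ with $\varepsilon=\pi/n$. On the near part $|t|\le K\varepsilon$, substituting $t=u\pi/n$ and the asymptotic (\ref{r35}) for $\psi_{k}'$ via $r(u\pi/n)^{k}/(u\pi/n)^{2}\sim u^{k\beta-2}\pi^{k\beta-2}n^{2}r(1/n)^{k}$ reduces the contribution to a constant multiple of $r(1/n)^{k}$ times $\int_{|u|\le K}u^{k\beta-2}e^{-iu\pi}\,du$; under $1<k\beta<2$ this integral converges both at $0$ and at infinity, and letting $n\to\infty$ and then $K\to\infty$ produces a closed form that, after applying the reflection identity (\ref{r36}) with $k\beta$ in place of $\beta$, collapses into $c(k,\beta)\,r(1/n)^{k}=c(k,\beta)(\overline{\Phi}_{1,n})^{k}$ as in (\ref{r53}). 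For the far part $|t|>K\varepsilon$, I would integrate by parts a second time. Periodicity again kills the contributions at $\pm\pi$, leaving only boundaries at $\pm K\varepsilon$ of order $\psi_{k}'(K\varepsilon)/n^{2}\sim K^{k\beta-2}r(1/n)^{k}$ and a remainder involving $\psi_{k}''$ with $|\psi_{k}''(t)|\sim Ct^{k\beta-3}L(1/t)^{k}$; the bound $\int_{K\varepsilon}^{\pi}t^{k\beta-3}\,dt\lesssim (K\varepsilon)^{k\beta-2}/(2-k\beta)$ shows this is also $O(K^{k\beta-2}r(1/n)^{k})$. Since $k\beta<\alpha<2$, both tend to $0$ as $K\to\infty$.

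The main technical obstacle is precisely what obstructs a direct reuse of the Proposition A argument: after a single integration by parts, the boundary value $\psi_{k}(K\varepsilon)/n$ is of order $K^{k\beta-1}r(1/n)^{k}$, which \emph{diverges} as $K\to\infty$ once $k\beta>1$. Starting one derivative higher turns that $K^{k\beta-1}$ into a vanishing $K^{k\beta-2}$, and it is precisely the hypothesis $k\beta<\alpha<2$ that makes this second-IBP scheme close; a secondary bookkeeping hurdle is matching the resulting trigonometric-gamma combination to the uniform constant $c(k,\beta)$, which is handled by the same reflection identity (\ref{r36}) that was used in Proposition A.
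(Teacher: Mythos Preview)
Your near-origin analysis and the first integration by parts are fine and coincide with the paper's opening move. The gap is in the far part $|t|>K\varepsilon$: you propose a second integration by parts producing $\psi_{k}''$ and assert $|\psi_{k}''(t)|\sim Ct^{k\beta-3}L(1/t)^{k}$. Under the standing assumptions this is not available. Since $\psi_{k}$ is built from $\hat{\phi}(t)=(1-\hat{p}(t))/(\mu(1-e^{it}))$, a second derivative of $\psi_{k}$ would involve $\hat{p}''$; but only $E|S_{1}|<\infty$ is assumed while $ES_{1}^{2}=\infty$, so $\hat{p}$ is once but in general not twice differentiable, and the only inputs supplied by \cite{P} are asymptotics for $1-\hat{p}(t)$ and $\hat{p}'(t)$, not for $\hat{p}''(t)$. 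Even in special cases where $\hat{p}$ happens to be smooth away from $0$, the bare hypothesis $\overline{F}\in RV(-\alpha)$ does not furnish the uniform bound $|\psi_{k}''(t)|\le Ct^{k\beta-3}L(1/t)^{k}$ on $(K\varepsilon,\pi)$ that your estimate $\int_{K\varepsilon}^{\pi}t^{k\beta-3}\,dt\lesssim (K\varepsilon)^{k\beta-2}$ would require.

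The paper's proof is engineered precisely to avoid a second derivative. After the same first integration by parts it splits $\mu\psi_{k}'=\varphi_{1}+\varphi_{2}$, where $\varphi_{2}(u)=ie^{iu}(1-\hat{\phi}(u))^{k}/(1-e^{iu})^{2}$ contains no $\hat{\phi}'$ and is therefore once more differentiable, so the Proposition~A argument (one further integration by parts) disposes of it. The dangerous piece $\varphi_{1}(u)=k(1-\hat{\phi}(u))^{k-1}\hat{\phi}'(u)/(1-e^{iu})$ already contains $\hat{\phi}'$; instead of differentiating it again the paper uses the shift identity $e^{-in(u+\varepsilon)}=-e^{-inu}$ (with $\varepsilon=\pi/n$) to rewrite the far integral as $\int(\varphi_{1}(u)-\varphi_{1}(u+\varepsilon))e^{-inu}\,du$ plus small edge terms, and then controls the increment $\varphi_{1}(u)-\varphi_{1}(u+\varepsilon)$ via H\"older-type estimates on $\hat{p}$, $\hat{p}'$ and $\hat{\phi}$ (the Lemma immediately following the proposition). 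This finite-difference device is the replacement for your missing $\psi_{k}''$, and it is exactly in bounding the resulting integral that the hypothesis $k\beta<\alpha$ (equivalently $(k-1)\beta<1$) is actually used.
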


\begin{proof}
We start with an integration by parts to get

\begin{equation*}
\func{Re}\int_{-\pi }^{\pi }\psi _{k}(u)e^{-inu}du=\func{Re}\frac{i}{n}%
\int_{-\pi }^{\pi }\psi _{k}^{\prime }(u)e^{-inu}du,
\end{equation*}%
the contribution from the end-points vanishing due to periodicity. Since $%
\psi _{k}^{\prime }(u)\backsim (k\beta -1)\psi _{k}(u)/u\in RV(\gamma -1)$
where now $\gamma =k\beta -1\in (0,1),$ we can repeat the previous proof to
get, with $\rho (t)=r(t)^{k}/t$

\begin{equation*}
\lim_{K\rightarrow \infty }\lim_{n\rightarrow \infty }\frac{i}{2\pi \rho
(1/n)}\int_{|u|\leq \varepsilon K}\psi _{k}^{\prime }(v)e^{-inv}dv=(1-k\beta
)\frac{\Gamma (1-\beta )^{k}}{\Gamma (2-k\beta )},
\end{equation*}%
and we will now show that 
\begin{equation*}
\lim_{K\rightarrow \infty }\lim_{n\rightarrow \infty }\frac{i}{\rho (1/n)}%
\int_{K\varepsilon <|u|\leq \pi }\psi _{k}^{\prime }(u)e^{-inu}du=0.
\end{equation*}%
We have%
\begin{eqnarray*}
\mu \psi _{k}^{\prime }(u) &=&\frac{k(1-\hat{\phi}(u)^{k-1}\hat{\phi}%
^{\prime }(u)+ie^{iu}(1-\hat{\phi}(u)^{k}/(1-e^{iu})}{1-e^{iu}}=\varphi
_{1}(u)+\varphi _{2}(u),\text{ where} \\
\varphi _{1}(u) &=&\frac{k(1-\hat{\phi}(u)^{k-1}\hat{\phi}^{\prime }(u)}{%
1-e^{iu}},\text{ }\varphi _{2}(u)=\frac{ie^{iu}(1-\hat{\phi}(u)^{k}}{%
(1-e^{iu})^{2}}.
\end{eqnarray*}%
Notice that $\varphi _{2}$ is differentiable and periodic, and as $%
u\downarrow 0,$ $\varphi _{2}(u)\backsim -\psi _{k}(u)/u\in RV(-\gamma ),$
so we can repeat the previous proof to get%
\begin{equation*}
\lim_{K\rightarrow \infty }\lim_{n\rightarrow \infty }\frac{i}{\rho (1/n)}%
\int_{K\varepsilon <|u|\leq \pi }\varphi _{2}(u)e^{-inu}du=0.
\end{equation*}%
Recalling that $\varepsilon =\pi /n,$ we write 
\begin{eqnarray}
\int_{K\varepsilon <|u|\leq \pi }\varphi _{1}(u+\varepsilon )e^{-inu}du
&=&e^{i\pi }\int_{(K-1)\varepsilon }^{\pi -\varepsilon }\varphi
_{1}(v)e^{-inv}dv+e^{i\pi }\int_{-\pi -\varepsilon \varphi
_{1}}^{-\varepsilon (K+1)}e^{-inv}dv  \notag \\
&=&-\int_{K\varepsilon <|u|\leq \pi }\varphi
_{1}(v)e^{-inv}dv-\int_{-K\varepsilon }^{-\varepsilon (K+1)}\varphi
_{1}(v)e^{-inv}dv,  \notag \\
&:&=-\int_{K\varepsilon <|u|\leq \pi }\varphi
_{1}(v)e^{-inv}dv-E_{\varepsilon }  \notag \\
\text{so that}\int_{K\varepsilon <|u|\leq \pi }\varphi _{1}(v)e^{-inv}dv
&=&\int_{K\varepsilon <|u|\leq \pi }\left( \varphi _{1}(u)-\varphi
_{1}(u+\varepsilon )\right) e^{-inu}du+2E_{\varepsilon }  \label{r40}
\end{eqnarray}%
We know that $|\varphi _{1}(u)|\backsim c\rho (u)$ as $u\downarrow 0,$ so 
\begin{equation*}
|E_{\varepsilon }|\leq \int_{(K-1)\varepsilon }^{(K+1)\varepsilon }|\varphi
_{1}(v)|dv\backsim 2\varepsilon |\varphi _{1}(K\varepsilon )|\backsim
cK^{k\beta -2}\rho (\varepsilon ),
\end{equation*}%
and we see that $\lim_{K\rightarrow \infty }\lim \sup_{\varepsilon
\downarrow 0}\frac{|E_{\varepsilon }|}{\rho (\varepsilon )}=0.$ We will
finish the proof by showing that%
\begin{equation}
\lim_{K\rightarrow \infty }\lim \sup_{\varepsilon \downarrow 0}\frac{1}{\rho
(\varepsilon )}\int_{K\varepsilon <|u|\leq \pi }\left( \varphi
_{1}(u)-\varphi _{1}(u+\varepsilon )\right) e^{-inu}du=0.  \label{r41}
\end{equation}%
Note first that a small calculation shows that it suffices to deal with the
contribution from $(K\varepsilon ,\pi ]$ in (\ref{r41}). First we show that
for any fixed $\delta >0$ the contribution from $\delta <u\leq \pi $ to the
integral in (\ref{r41}) is neglible. Note that $\phi _{1}$ and $\phi
_{1}^{\prime }$ are each bounded in modulus by a constant on this interval.
Moreover we easily see from the following Lemma \ref{A} that on the range of
integration 
\begin{equation*}
|\hat{\phi}^{\prime }(u)-\hat{\phi}^{\prime }(u+\varepsilon )|\leq c\left( |%
\hat{p}^{\prime }(u)-\hat{p}^{\prime }(u+\varepsilon )|+|\hat{\phi}(u)-\hat{%
\phi}(u+\varepsilon )|\right) \leq c\varepsilon ,
\end{equation*}%
so that%
\begin{equation*}
|\int_{\delta <|u|\leq \pi }\left( \varphi _{1}(u)-\varphi
_{1}(u+\varepsilon )\right) e^{-inu}du|=O(\varepsilon ),
\end{equation*}%
so we are left to show that for sufficiently small $\delta $%
\begin{equation*}
\lim_{K\rightarrow \infty }\lim \sup_{\varepsilon \downarrow 0}\frac{%
|J_{\varepsilon }|}{\rho (\varepsilon )}=0,\text{ where}
\end{equation*}%
\begin{equation*}
J_{\varepsilon }:=\int_{u\in (K\varepsilon ,\delta )}|\varphi
_{1}(u)-\varphi _{1}(u+\varepsilon )|du.
\end{equation*}
\end{proof}

\begin{lemma}
(i) 
\begin{eqnarray*}
\sup_{t}|\hat{p}(t)-\hat{p}(t+\varepsilon )-i\mu \varepsilon |
&=&O(\varepsilon r(\varepsilon ))\text{ and } \\
\sup_{t}|\hat{\phi}(t)-\hat{\phi}(t+\varepsilon )| &=&O(r(\varepsilon )).%
\text{as }\varepsilon \downarrow 0.\text{ }
\end{eqnarray*}

(ii) For any fixed $\delta >0$%
\begin{equation*}
\sup_{\delta <t\leq \pi }|\hat{\phi}(t)-\hat{\phi}(t+\varepsilon
)|=O(\varepsilon )\text{ as }\varepsilon \downarrow 0.\text{ }
\end{equation*}

(iii) As $\varepsilon ,t\downarrow 0$ with $t\geq 2\varepsilon ,$%
\begin{equation*}
|\hat{\phi}(t)-\hat{\phi}(t+\varepsilon )|=O(\frac{\varepsilon r(t)}{t})
\end{equation*}
\end{lemma}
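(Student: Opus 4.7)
The three bounds all descend from the defining identity $\mu(1-e^{it})\hat{\phi}(t)=1-\hat{p}(t)$ combined with the regular variation estimates (\ref{r32})--(\ref{r33}) from the preceding lemma and standard Karamata truncations of $\hat{p}$. Part (i) is the heart of the matter; (ii) and (iii) follow quickly from it and the earlier lemma.

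For the first half of (i), I would write
\[
\hat{p}(t+\varepsilon)-\hat{p}(t)-i\mu\varepsilon=\sum_{m}p_{m}e^{itm}\bigl(e^{i\varepsilon m}-1-i\varepsilon m\bigr)+\varepsilon\bigl(\hat{p}^{\prime}(t)-i\mu\bigr).
\]
The first sum is dominated in modulus by $\sum_m p_m|e^{i\varepsilon m}-1-i\varepsilon m|$ \emph{uniformly in $t$}, and splitting at $|m|=1/\varepsilon$ and applying $|e^{ix}-1-ix|\le \min(x^{2}/2,\,|x|+2)$ together with Karamata's theorem (using $\overline{F}\in RV(-\alpha)$ and the tail condition (\ref{r0})) yields $O(\overline{F}(1/\varepsilon))=O(\varepsilon r(\varepsilon))$. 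An entirely analogous truncation shows $|\hat{p}^{\prime}(t)-i\mu|\le Cr(|t|)$ for small $|t|$, giving the same order for the second term in that regime; for $|t|$ bounded away from $0$ one uses boundedness of $\hat{p}^{\prime}$ directly.

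For the second half of (i), I would clear denominators in the defining relation to obtain
\[
\mu\bigl(1-e^{i(t+\varepsilon)}\bigr)\bigl(\hat{\phi}(t+\varepsilon)-\hat{\phi}(t)\bigr)=\hat{p}(t)-\hat{p}(t+\varepsilon)+\mu\hat{\phi}(t)\bigl(e^{i(t+\varepsilon)}-e^{it}\bigr),
\]
then substitute the first-half estimate and $e^{i(t+\varepsilon)}-e^{it}=i\varepsilon e^{it}+O(\varepsilon^{2})$. The two $i\mu\varepsilon$ contributions combine to $i\mu\varepsilon(e^{it}\hat{\phi}(t)-1)$, which, together with $\hat{\phi}(t)=1+O(r(|t|))$, leaves a numerator of size $O(\varepsilon(|t|+r(|t|))+\varepsilon r(\varepsilon))$. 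To sidestep the apparent singularity of the denominator at $t=-\varepsilon$, I would split regimes: for $|t|\le2\varepsilon$ use the direct triangle bound $|\hat{\phi}(t+\varepsilon)-\hat{\phi}(t)|\le|\hat{\phi}(t+\varepsilon)-1|+|1-\hat{\phi}(t)|=O(r(\varepsilon))$; for $|t|>2\varepsilon$ integrate the derivative bound $|\hat{\phi}^{\prime}(u)|\le Cr(u)/u$ and use monotonicity of $u\mapsto r(u)/u\in RV(\beta-1)$ at $0$ (via Potter bounds) to get $\int_{t}^{t+\varepsilon}|\hat{\phi}^{\prime}|\le \varepsilon Cr(t)/t\le Cr(\varepsilon)$.

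Parts (ii) and (iii) are comparatively routine. For (ii), on $[\delta,\pi]$ the factor $1-e^{it}$ is bounded below in modulus, so $\hat{\phi}$ is $C^{1}$ there with $\sup|\hat{\phi}^{\prime}|<\infty$ (its derivative being a rational combination of $\hat{p}$, $\hat{p}^{\prime}$ and bounded-below denominators); the mean value theorem gives $|\hat{\phi}(t+\varepsilon)-\hat{\phi}(t)|\le \varepsilon\sup|\hat{\phi}^{\prime}|=O(\varepsilon)$. For (iii), with $t\ge2\varepsilon$ the interval $[t,t+\varepsilon]$ lies inside $[t,3t/2]$, and the derivative bound $|\hat{\phi}^{\prime}(u)|\le Cr(u)/u$ together with the eventual monotone decrease of $r(u)/u$ near $0$ yields $|\hat{\phi}(t+\varepsilon)-\hat{\phi}(t)|\le\varepsilon\cdot Cr(t)/t$.

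The principal obstacle is reconciling the uniform-in-$t$ claim of (i) with the fact that $r(\varepsilon)\to0$ while $\varepsilon$ itself does not: the $O(\varepsilon r(\varepsilon))$ rate stems from a cancellation between $\varepsilon\hat{p}^{\prime}(t)$ and $i\mu\varepsilon$ which is genuine only for $|t|$ small. Making the statement precise requires either implicitly restricting $t$ to a small neighbourhood of $0$ (which is the only regime in which (i) is invoked in the subsequent proof of Proposition~2) or absorbing the residual $O(\varepsilon)$ contribution into (ii) when $|t|$ is bounded away from zero.
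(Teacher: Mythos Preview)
Your argument is sound, and in one respect more careful than the paper's own proof. Let me compare the routes.

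For the \emph{second} estimate in (i) the paper proceeds much more directly than you do: it simply bounds
\[
\bigl|\hat{\phi}(t)-\hat{\phi}(t+\varepsilon)\bigr|\le\sum_{n}|\phi_{n}|\,|1-e^{in\varepsilon}|
\]
and truncates at $|n|=1/\varepsilon$, using $|\phi_{n}|\in RV(-\alpha)$ and Karamata. This is a one-line proof; your route through clearing denominators and integrating the derivative bound on $\hat{\phi}^{\prime}$ is correct but unnecessarily elaborate here.

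For (ii) and (iii) the paper does \emph{not} use mean-value/derivative bounds. Instead it writes, via the relation $\mu(1-e^{it})\hat{\phi}(t)=1-\hat{p}(t)$,
\[
\hat{\phi}(t)-\hat{\phi}(t+\varepsilon)=T_{1}+T_{2},
\]
where $T_{1}$ isolates the change in the denominator $1-e^{it}$ and $T_{2}$ the change in the numerator $1-\hat{p}$; each piece is then estimated using the first half of (i). Your approach via $|\hat{\phi}^{\prime}(u)|\le Cr(u)/u$ (from (\ref{r33})) and integration over $[t,t+\varepsilon]$ is equally valid and arguably cleaner for (iii).

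For the \emph{first} estimate in (i) your decomposition is sharper than the paper's. The paper writes $\sup_{t}|\hat{p}(t)-\hat{p}(t+\varepsilon)-i\mu\varepsilon|\le\sum_{n}p_{n}|1-e^{in\varepsilon}-in\varepsilon|$, which silently drops the factor $e^{int}$ and is not literally correct for general $t$. Your split into $\sum_{m}p_{m}e^{itm}(e^{i\varepsilon m}-1-i\varepsilon m)+\varepsilon(\hat{p}^{\prime}(t)-i\mu)$ is the right repair, and your closing paragraph correctly diagnoses that the uniform-in-$t$ rate $O(\varepsilon r(\varepsilon))$ genuinely requires $|t|$ small (which is indeed the only regime in which it is invoked later).
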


\begin{proof}
(i) Just write%
\begin{eqnarray*}
\sup_{t}|\hat{p}(t)-\hat{p}(t+\varepsilon )-i\mu \varepsilon | &\leq
&\sum_{-\infty }^{\infty }p_{n}|1-e^{in\varepsilon }-in\varepsilon | \\
\leq c\left( \varepsilon ^{2}\sum_{|n|\leq 1/\varepsilon
}n^{2}p_{n}+\varepsilon \sum_{|n|>1/\varepsilon }np_{n}\right) &\backsim &c%
\overline{F}(1/\varepsilon )\backsim c\varepsilon r(\varepsilon ).
\end{eqnarray*}

Similarly%
\begin{eqnarray*}
\sup_{t}|\hat{\phi}(t)-\hat{\phi}(t+\varepsilon )| &\leq &\sum_{-\infty
}^{\infty }\phi _{n}|1-e^{in\varepsilon }| \\
\leq c\left( \varepsilon \sum_{|n|\leq 1/\varepsilon }n\phi
_{n}+\sum_{|n|>1/\varepsilon }\phi _{n}\right) &\backsim &c\overline{\Phi }%
_{1,1/\varepsilon }\backsim cr(\varepsilon ).
\end{eqnarray*}%
(ii) We write%
\begin{eqnarray*}
\hat{\phi}(t)-\hat{\phi}(t+\varepsilon )=1-\hat{\phi}(t+\varepsilon )-(1-%
\hat{\phi}(t)) && \\
=\frac{\hat{p}(t+\varepsilon )-1-\mu (1-e^{i(t+\varepsilon )})}{\mu
(1-e^{i(t+\varepsilon )})}-\frac{\hat{p}(t)-1-\mu (1-e^{it})}{\mu (1-e^{it})}
&& \\
=\left( \hat{p}(t+\varepsilon )-1-\mu (1-e^{i(t+\varepsilon )})\right) \frac{%
e^{it}(1-e^{i\varepsilon })}{\mu (1-e^{it})(1-e^{i(t+\varepsilon )})} && \\
+\frac{\hat{p}(t+\varepsilon )-\hat{p}(t)+\mu (e^{it}-e^{i(t+\varepsilon )})%
}{\mu (1-e^{it})}:=T_{1}+T_{2}. &&
\end{eqnarray*}

Since $|1-e^{it}|$ is bounded away from $0$ on the range, it is clear that $%
T_{1}=O(\varepsilon ),$ and writing%
\begin{equation*}
T_{2}=\frac{\hat{p}(t)-\hat{p}(t+\varepsilon )-i\mu \varepsilon }{1-e^{it}}+%
\frac{(i\varepsilon +e^{it}(1-e^{i\varepsilon })}{(1-e^{it})}
\end{equation*}%
it follows from part (i) that $T_{2}=O(\varepsilon ).$

(iii) Just note that%
\begin{equation*}
T_{1}\backsim (\varepsilon +t)r(\varepsilon +t)\cdot \frac{i\varepsilon }{%
t(\varepsilon +t)}\leq c\frac{\varepsilon r(t)}{t},
\end{equation*}%
and 
\begin{eqnarray*}
|T_{2}| &\leq &\frac{\varepsilon r(\varepsilon )}{|1-e^{it}|}+\frac{%
|e^{it}(1-e^{i\varepsilon }+i\varepsilon )+i\varepsilon (1-e^{it})|}{%
|1-e^{it}|} \\
&\backsim &\frac{\varepsilon r(\varepsilon )}{t}+\frac{\varepsilon
^{2}+\varepsilon t}{t}=O(\frac{\varepsilon r(t))}{t}
\end{eqnarray*}

Rewriting $\varphi _{1}(u)=\frac{g_{1}(u)\phi ^{\prime }(u)}{1-e^{iu}}$ we
show first that replacing $e^{iu}$ by $e^{i(u+\varepsilon )}$ has a neglible
effect. We have%
\begin{eqnarray*}
|\frac{1}{1-e^{iu}}-\frac{1}{1-e^{i(u+\varepsilon )}}| &=&|\frac{%
e^{iu}(1-e^{i\varepsilon })}{(1-e^{i\mu u})(1-e^{i\mu (u+\varepsilon )})}| \\
\leq \frac{C\varepsilon }{|(1-e^{iu})(1-e^{i(u+\varepsilon )})|} &\backsim &%
\frac{C\varepsilon }{u(u+\varepsilon )}\text{ as }u,\varepsilon \downarrow 0.
\end{eqnarray*}%
We know $g_{1}(u)\phi ^{\prime }(u)\backsim \mathbf{c}\rho (u)$ as $%
u\downarrow 0,$ so for sufficiently small $\delta $ 
\begin{eqnarray*}
&\int_{K\varepsilon <|u|\leq \delta }&|\frac{g_{1}(u)\phi ^{\prime }(u)}{%
1-e^{i\mu u}}-\frac{g_{1}(u)\phi ^{\prime }(u)}{1-e^{i\mu (u+\varepsilon )}}%
|du \\
&\leq &c\int_{K\varepsilon <|u|\leq \delta }\frac{|g_{1}(u)\phi ^{\prime
}(u)|du}{|u(u+\varepsilon )|}=c\int_{K<|v|\leq \delta /\varepsilon }\frac{%
|g_{1}(\varepsilon v)\phi ^{\prime }(\varepsilon u)|dv}{|v(v+1)|} \\
&\backsim &c\rho (\varepsilon )\int_{K<|v|\leq \delta /\varepsilon }\frac{%
|v|^{k\beta -1}dv}{|v(v+1)|}\text{ as }\varepsilon \downarrow 0,
\end{eqnarray*}%
where we use Potter's bounds to bound the integrand. Letting $K\rightarrow
\infty $ gives the required result. If we put $d_{\varepsilon }(u)$ for $%
\hat{\phi}(u)-\hat{\phi}(u+\varepsilon ),$ we have%
\begin{equation}
(1-\hat{\phi}(u))^{k-1}-(1-\hat{\phi}(u+\varepsilon ))^{k-1}=d_{\varepsilon
}(u)\sum_{0}^{k-2}(1-\hat{\phi}(u))^{j}(1-\hat{\phi}(u+\varepsilon ))^{k-2-j}
\label{r38}
\end{equation}%
and Lemma \ref{A} gives $d_{\varepsilon }(u)=O(\varepsilon r(u)/u)$ on the
range of integration. So for each fixed $j$%
\begin{eqnarray*}
&&\int_{K\varepsilon <u\leq \delta }\frac{d_{\varepsilon }(u)(1-\hat{\phi}%
(u))^{j}}{1-e^{i(u+\varepsilon )}}\hat{\phi}^{\prime }(u)du \\
&=&O\left( \varepsilon \int_{K\varepsilon <u\leq \delta }\frac{r(u)(1-\hat{%
\phi}(u))^{j}(1-\hat{\phi}(u+\varepsilon ))^{k-2-j}\hat{\phi}^{\prime }(u)}{%
1-e^{i(u+\varepsilon )}}du\right) \\
&=&O\left( \varepsilon ^{2}\int_{K\varepsilon <u\leq \delta }\frac{%
r(\varepsilon u)(1-\hat{\phi}(\varepsilon u))^{j}(1-\hat{\phi}(\varepsilon
(u+1))^{k-2-j}\hat{\phi}^{\prime }(\varepsilon u)}{1-e^{i\varepsilon (u+1)}}%
du\right) \\
&=&O\left( \frac{r(\varepsilon )^{k}}{\varepsilon }\int_{K}^{\infty }\frac{%
u^{\beta (j+1)}(u+1)^{\beta (k-2-j}}{u^{2}(u+1)}du\right)
\end{eqnarray*}%
and it is easy to see that%
\begin{equation*}
\lim_{K\rightarrow \infty }\lim \sup_{\varepsilon \downarrow 0}\frac{1}{\rho
(\varepsilon )}\int_{K\varepsilon <|u|\leq \delta }\frac{\left(
g_{1}(u)-g_{2}(u+\varepsilon )\right) }{1-e^{i(u+\varepsilon )}}du=0.
\end{equation*}%
Finally we deal with%
\begin{eqnarray}
&&|\int_{K\varepsilon <|u|\leq \delta }\frac{(1-\hat{\phi}(u+\varepsilon
))^{k-1}}{1-e^{i(u+\varepsilon )}}\left( \hat{\phi}^{\prime }(u)-\hat{\phi}%
^{\prime }(u+\varepsilon )\right) e^{-inu}du|  \notag \\
&\leq &c\int_{K\varepsilon <|u|\leq \delta }\frac{r(u+\varepsilon )^{k-1}}{%
u+\varepsilon }|\hat{\phi}^{\prime }(u)-\hat{\phi}^{\prime }(u+\varepsilon
)|du|  \label{r42}
\end{eqnarray}%
We have 
\begin{equation*}
\mu \hat{\phi}^{\prime }(u)=\frac{-\hat{p}^{\prime }(u)}{1-e^{iu}}+\frac{%
ie^{iu}(1-\hat{p}(u))}{(1-e^{iu})^{2}}
\end{equation*}%
so%
\begin{eqnarray}
\mu (\hat{\phi}^{\prime }(u)-\hat{\phi}^{\prime }(u+\varepsilon )) &=&\left( 
\frac{\hat{p}^{\prime }(u+\varepsilon )}{1-e^{i(u+\varepsilon )}}-\frac{\hat{%
p}^{\prime }(u)}{1-e^{iu}}\right)  \label{r43} \\
&&+\left( \frac{ie^{iu}(1-\hat{p}(u))}{(1-e^{iu})^{2}}-\frac{%
ie^{i(u+\varepsilon )}(1-\hat{p}(u+\varepsilon ))}{(1-e^{i(u+\varepsilon
})^{2}}\right) .
\end{eqnarray}%
We estimate the first term in (\ref{r43}), as $u,\varepsilon \downarrow 0,$
by%
\begin{eqnarray}
&&\frac{\hat{p}^{\prime }(u+\varepsilon )-\hat{p}^{\prime }(u)}{%
1-e^{i(u+\varepsilon )}}+\hat{p}^{\prime }(u)\left( \frac{1}{%
1-e^{i(u+\varepsilon )}}-\frac{1}{1-e^{iu}}\right)  \notag \\
&\backsim &\frac{ir(\varepsilon )}{u+\varepsilon }+i\mu \frac{i\varepsilon }{%
(u+\varepsilon )u}  \label{46}
\end{eqnarray}%
and the second by%
\begin{eqnarray}
&&(1-\hat{p}(u))\left( \frac{ie^{iu}}{(1-e^{iu})^{2}}-\frac{%
ie^{i(u+\varepsilon )}}{(1-e^{i(u+\varepsilon )})^{2}}\right) +\frac{%
ie^{i(u+\varepsilon )}(\hat{p}(u+\varepsilon )-\hat{p}(u))}{%
(1-e^{i(u+\varepsilon })^{2}}  \notag \\
&\backsim &i\mu u\frac{\varepsilon (2u+\varepsilon )}{u^{2}(u+\varepsilon
)^{2}}+O\left( \frac{\varepsilon }{(u+\varepsilon )^{2}}\right) .
\label{r47}
\end{eqnarray}%
Of the 4 terms in (\ref{46}) and (\ref{r47}), all except the first can be
dealt with by previous arguments, but $\frac{r(\varepsilon )}{u+\varepsilon }
$ requires care. Since $k\beta <\alpha $ implies $(k-1)\beta <1$ we see that%
\begin{equation*}
r(\varepsilon )|\int_{K\varepsilon <u\leq \delta }\frac{(1-\hat{\phi}%
(u))^{k-1}du}{(1-e^{i(u+\varepsilon )})(u+\varepsilon )}|\backsim \frac{%
r(\varepsilon )^{k}}{\varepsilon }\int_{K}^{\infty }\frac{u^{(k-1)\beta }}{%
(u+1)^{2}}du,
\end{equation*}%
and the conclusion follows by letting $K\rightarrow \infty .$ This proves
(i), and we start the proof of (ii) by getting a good asymptotic bound for
the difference $u_{n-1}-u_{n}.$ The first step is
\end{proof}

\begin{proposition}
\label{A}With $\Delta _{n}=u_{n-1}-u_{n},n=0,\pm 1,\pm 2,\cdots $ we have 
\begin{equation}
\lim_{n\rightarrow \pm \infty }n\Delta _{n}=0.  \label{05}
\end{equation}
\end{proposition}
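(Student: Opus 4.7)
The plan is to realise $\Delta_n$ as the Fourier coefficient of a concrete bounded function and then to integrate by parts. Using (\ref{r11}), the formal identity
\[\sum_n \Delta_n e^{int} \;=\; (e^{it}-1)\hat u(t) \;=\; \frac{e^{it}-1}{1-\hat p(t)} \;=\; -\frac{1}{\mu\hat\phi(t)}\]
points to the function $g(t):=-1/(\mu\hat\phi(t))$. Under the tacit aperiodicity assumption, $\hat\phi$ is continuous and non-vanishing on $[-\pi,\pi]\setminus\{0\}$ with $\hat\phi(0)=1$, so $g$ is continuous and bounded. The first step is to establish rigorously that
\[\Delta_n \;=\; \frac{1}{2\pi}\int_{-\pi}^\pi g(t)\,e^{-int}\,dt.\]
This I would do by checking that the Fourier coefficients $g_n$ of $g$ satisfy the same equation $h_n-(p\ast h)_n=\delta_{n,1}-\delta_{n,0}$ that $\Delta_n$ does (from $(1-\hat p(t))g(t)=-(1-e^{it})$, which is immediate from (\ref{r11})), together with $h_n\to 0$ at $\pm\infty$, and then invoking uniqueness of such solutions via the standard bounded-harmonic-functions argument for positive-mean aperiodic random walks.

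With this representation in hand, integration by parts---the boundary contributions cancel by $2\pi$-periodicity of $\hat\phi$---yields
\[n\Delta_n \;=\; \frac{1}{2\pi\mu i}\int_{-\pi}^\pi e^{-int}\,\frac{\hat\phi'(t)}{\hat\phi(t)^2}\,dt,\]
and the Riemann--Lebesgue lemma then delivers $n\Delta_n\to 0$ as $|n|\to\infty$, treating both signs uniformly, provided $\hat\phi'/\hat\phi^2\in L^1[-\pi,\pi]$. Near $t=0$ this is immediate from (\ref{r33}): $|\hat\phi'(t)|\backsim C|t|^{-1}r(|t|)$ with $r$ regularly varying of index $\beta>0$ at $0^+$, so the integrand behaves like $|t|^{\beta-1}L(|t|)$ and is integrable, while $|\hat\phi(t)|\to 1$ there contributes nothing. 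Away from $0$, $\hat\phi'$ is bounded (since $\hat p$ is smooth) and $\hat\phi$ is bounded away from zero by aperiodicity.

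The main obstacle I expect is the inversion step. Because $u_n\to 1/\mu\neq 0$, the series $\sum \Delta_n e^{int}$ is not a priori convergent in any convenient sense, so Fourier inversion cannot simply be asserted. The uniqueness route sketched above (or, alternatively, Abel regularisation of $\sum \Delta_n z^n$ followed by $z\uparrow 1$) handles this, but needs care. Once past that point, everything else---integration by parts, the integrability of $\hat\phi'/\hat\phi^2$, and Riemann--Lebesgue---follows directly from the lemma already established and the standing assumptions.
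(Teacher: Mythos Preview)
Your approach is essentially the paper's: represent $\Delta_n$ as the $n$th Fourier coefficient of $(e^{it}-1)/(1-\hat p(t))$, integrate by parts using periodicity, check that the resulting integrand is in $L^1[-\pi,\pi]$, and apply Riemann--Lebesgue. The differences are cosmetic. You write the integrand as $\hat\phi'/(\mu\hat\phi^2)$ and invoke (\ref{r33}) for integrability near $0$; the paper stays with $\hat p$ and uses instead the cruder moment estimates $1-\hat p(t)+i\mu t=o(|t|^{1+\delta})$ and $\hat p'(t)-i\mu=o(|t|^{\delta})$, available from $E|X|^{1+\delta}<\infty$. These give the same $|t|^{\delta-1}$ bound, so nothing is gained or lost either way.

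On the inversion step you are being more cautious than necessary. The paper simply writes $\Delta_n=\sum_m\bigl(P(S_m=n-1)-P(S_m=n)\bigr)$, inserts $\hat p(t)^m(e^{it}-1)$ under the integral, and sums. This is rigorous: the partial sums $(e^{it}-1)\sum_{m\le M}\hat p(t)^m=(e^{it}-1)(1-\hat p(t)^{M+1})/(1-\hat p(t))$ are bounded uniformly in $M$ by $2\sup_t|(e^{it}-1)/(1-\hat p(t))|<\infty$ (aperiodicity), so dominated convergence applies. Your uniqueness/bounded-harmonic route would also work but is heavier machinery than the problem requires.
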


\begin{proof}
We have the inversion formula%
\begin{eqnarray}
\Delta _{n}=\sum_{0}^{\infty }P(S_{m}=n-1)-P(S_{m}=n) &=&  \notag \\
\sum_{0}^{\infty }\frac{1}{2\pi }\int_{-\pi }^{\pi }\hat{p}%
(t)^{m}e^{-itn}(e^{it}-1)dt &=&\frac{1}{2\pi }\int_{-\pi }^{\pi }\frac{%
(e^{it}-1)e^{-itn}dt}{1-\hat{p}(t)}.  \label{r16}
\end{eqnarray}%
Integrating by parts and noting that since everything is periodic with
period $2\pi $ the contribution from the end points cancel, gives%
\begin{eqnarray}
2\pi \Delta _{n} &=&\frac{1}{n}\int_{-\pi }^{\pi }e^{-itn}f_{1}(t)dt,\text{
with}  \label{04} \\
f_{1}(t) &=&\left( \frac{e^{it}(1-\hat{p}(t))-i(e^{it}-1)\hat{p}^{\prime }(t)%
}{(1-\hat{p}(t))^{2}}\right) .  \label{r6}
\end{eqnarray}%
Under our asumptions $\exists \delta \in (0,1)$ such that $E|X|^{1+\delta
}<\infty ,$ and this implies that%
\begin{equation*}
1-\hat{p}(t)+i\mu t\text{ }=o(|t|^{1+\delta })\text{ and }\hat{p}^{\prime
}(t)-i\mu \text{ }=o(|t|^{\delta })\text{as }|t|\rightarrow 0,
\end{equation*}%
so that%
\begin{equation*}
f_{1}(t)=o(|t|^{\delta -1})\text{ as }|t|\rightarrow 0,
\end{equation*}%
and it follows that $f_{1}$ is integrable in a neighbourhood of zero. Since $%
1-\hat{p}(t)$ is bounded away from $0$ for $|t|\in (\delta ,\pi )$, we see
that the function $f_{1}$ in (\ref{r6}) is integrable over $(-\pi ,\pi ),$
so the result follows by the Riemann-Lebesgue lemma.
\end{proof}

Recall from (\ref{r11}) that the Fourier transform of $\phi $ is given by%
\begin{equation*}
\hat{\phi}(t)=\sum_{-\infty }^{\infty }\phi _{n}e^{int}=\frac{1-\hat{p}(t)}{%
\mu (1-e^{it})},
\end{equation*}%
so we see that%
\begin{equation}
\hat{\Delta}(t):=\sum_{-\infty }^{\infty }\Delta _{n}e^{int}=\frac{e^{it}-1}{%
1-\hat{p}(t)}=-\frac{1}{\mu \hat{\phi}(t)}.  \label{r24}
\end{equation}%
Differentiating this gives%
\begin{equation}
\hat{\Delta}^{\prime }(t)=\frac{\hat{\phi}^{\prime }(t)}{\mu \hat{\phi}%
(t)^{2}}=\mu \hat{\phi}^{\prime }(t)\hat{\Delta}(t)^{2}  \label{r23}
\end{equation}%
and comparing coefficients we get%
\begin{equation}
n\Delta _{n}/\mu =\sum_{-\infty }^{\infty }m\phi _{m}\Delta _{n-m}^{(2)}%
\text{ where }\Delta _{n}^{(2)}:=\sum_{-\infty }^{\infty }\Delta _{m}\Delta
_{n-m}\text{.}  \label{06}
\end{equation}%
Note that 
\begin{equation*}
\sum_{-\infty }^{\infty }\Delta _{n}^{(2)}=\left( \sum_{-\infty }^{\infty
}\Delta _{n}\right) ^{2}=\frac{1}{\mu ^{2}}.
\end{equation*}%
We will exploit Proposition \ref{A} and (\ref{06}) to get

\begin{proposition}
\label{B} If (\ref{r1}) and (\ref{r0}) hold we have%
\begin{equation}
\lim_{n\rightarrow \infty }\frac{\Delta _{n}}{\phi _{n}^{+}}=\frac{1}{\mu },%
\text{ }\lim_{n\rightarrow -\infty }\frac{\Delta _{n}}{\phi _{n}^{+}}=0.
\label{07}
\end{equation}
\end{proposition}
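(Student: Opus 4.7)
The plan is to combine the identity (\ref{06}),
\[
n\Delta_n/\mu \;=\; \sum_m m\phi_m\,\Delta^{(2)}_{n-m},
\]
with Proposition \ref{A} and the normalisation $\sum_k \Delta^{(2)}_k = 1/\mu^2$. After substituting $k=n-m$ this reads $n\Delta_n/\mu = \sum_k (n-k)\phi_{n-k}\,\Delta^{(2)}_k$. The heuristic is that for $n\to\infty$ the sum is dominated by bounded $k$, where slow variation of $m\overline{F}(m)$ (and the negligibility of the left tail of $\phi$ by (\ref{r0})) gives $(n-k)\phi_{n-k}\sim n\phi_n$; pairing this with $\sum_k\Delta^{(2)}_k=1/\mu^2$ then identifies the main term and produces the stated limit.

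The first step would be to upgrade the crude bound $|\Delta_j|=o(1/|j|)$ from Proposition \ref{A} to $|\Delta_n|=O(\phi_n^+)$ on the positive side; I would do this by substituting the crude bound into (\ref{06}) (if necessary iterating) to control the right-hand side, then arguing back through the identity. Once $|\Delta_n|=O(\phi_n^+)$ is established, $\Delta$ is absolutely summable on the right, and Proposition \ref{A} together with assumption (\ref{r0}) makes it absolutely summable on the left; consequently $\sum_k|\Delta^{(2)}_k|<\infty$, so the weighted sum in (\ref{06}) is absolutely convergent and limits can be taken inside the sum.

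With this in hand, I would decompose $\sum_k=\sum_{|k|\le K}+\sum_{|k|>K}$ for large fixed $K$. On the inner sum, uniform slow-variation asymptotics of $(n-k)\phi_{n-k}$ combined with $\sum_{|k|\le K}\Delta^{(2)}_k\to 1/\mu^2$ as $K\to\infty$ deliver the main term. The hard part will be showing that the tail is negligible:
\[
\lim_{K\to\infty}\limsup_{n\to\infty}\frac{1}{n\phi_n^+}\Bigl|\sum_{|k|>K}(n-k)\phi_{n-k}\,\Delta^{(2)}_k\Bigr|\;=\;0.
\]
I would partition this range into $K<k\le n/2$ (treated via regular variation of $(n-k)\phi_{n-k}$ together with the summability of $\Delta^{(2)}$), $k>n/2$ (where $n-k$ is small or negative and (\ref{r0}) supplies the control), and $k<-K$ (symmetric to the first). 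The subtle subcase is $k>n$, where $\phi_{n-k}$ samples the left tail of $\phi$ and one must balance the smallness from (\ref{r0}) against the factor $|k||\Delta^{(2)}_k|$. For the companion assertion $\lim_{n\to-\infty}\Delta_n/\phi_n^+=0$, the same decomposition applies: for $n\to-\infty$ with bounded $k$, $\phi_{n-k}$ is controlled by $F(-(|n|-|k|))=o(\overline{F}(|n|))$ by (\ref{r0}), so even the would-be main term in the decomposition is already $o(\phi_{|n|}^+)$.
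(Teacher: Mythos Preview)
Your plan is the paper's own argument up to cosmetic choices. The paper also bootstraps via (\ref{06}): from $n\Delta_n\to 0$ (Proposition~\ref{A}) one gets $|\Delta^{(2)}_n|\le c(\log n)/n\le cn^{\delta-1}$, and substituting this back into (\ref{06}) with the crude bound $|m\phi_m|\le c|m|^{\delta-\beta}$ gives $|\Delta_n|\le c|n|^{\delta-\alpha}$ after a single pass. This is weaker than your stated target $|\Delta_n|=O(\phi_n^+)$---the slowly varying factor in $\phi_n^+$ does not fall out of a naive iteration---but it already gives absolute summability of $\Delta$ and of $\Delta^{(2)}$, which is all the argument needs. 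The paper then splits (\ref{06}) in the \emph{un-substituted} variable at $|m|=|n|/2$: the piece $|m|<|n|/2$ is controlled by the improved bound $|\Delta^{(2)}_{n-m}|\le c|n|^{\delta-\alpha}$ together with $\sum_{|m|<n/2}|m\phi_m|$, and the piece $|m|\ge |n|/2$ is, after the substitution $m\mapsto n-m$, exactly your ``main term plus summable tail in $k$'', handled by dominated convergence with dominating sequence $c|\Delta^{(2)}_k|$. Your fixed-$K$ decomposition is the hand-made version of that dominated-convergence step.

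One slip to flag: in your range $n/2<k\le n$ the index $n-k$ is small but \emph{nonnegative}, so (\ref{r0}) is irrelevant there. What actually controls this block is the improved decay $|\Delta^{(2)}_k|\le c|k|^{\delta-\alpha}$ that your bootstrap supplies, combined with $\sum_{0\le j<n/2}j\phi_j\asymp n^{1-\beta}L(n)$; after division by $n\phi_n^+$ this leaves $O(n^{\delta-\beta})=o(1)$. Assumption (\ref{r0}) enters only for the subcase $k>n$ and for the limit as $n\to-\infty$, exactly as you indicate elsewhere.
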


\begin{proof}
From $n\Delta _{n}\rightarrow 0$ we have $\sup |n\Delta _{n}|<\infty $ so
for $n\geq 0$%
\begin{eqnarray*}
|\Delta _{n}^{(2)}| &\leq &c\{|\Delta _{0}||\Delta _{n}|+\sum_{1}^{n-1}\frac{%
1}{m(n-m)}+\sum_{n+1}^{\infty }\frac{1}{m(m-n)}+\sum_{-\infty }^{-n-1}\frac{1%
}{|m|(n-m)} \\
&\leq &\frac{c\log n}{n}\leq cn^{\delta -1},
\end{eqnarray*}%
for any $\delta >0,$ and a similar bound holds for $|\Delta _{-n}^{(2)}|,$ $%
n\geq 0.$ Taking $\delta <\beta /2$ and using this bound in (\ref{06})
together with the fact that $|\phi _{m}|\leq c|m|^{\delta -\alpha }$ for all 
$m\neq 0$ we see that%
\begin{equation*}
|\sum_{-\infty }^{\infty }m\phi _{m}\Delta _{n-m}^{(2)}|\leq c\sum_{-\infty
}^{\infty }|m|^{\delta -\beta }|n-m|^{\delta -1}\backsim c|n|^{2\delta
-\beta }\text{ as }n\rightarrow \pm \infty ,
\end{equation*}%
so that (\ref{06}) gives $|\Delta _{n}|\leq c|n|^{\delta -\alpha }$, and it
follows easily that $|\Delta _{n}^{(2)}|\leq c|n|^{\delta -\alpha }.$
Finally, with $\psi _{n}:=n\phi _{n}^{+}\in RV(-\beta )$ we write%
\begin{equation}
\frac{\Delta _{n}}{\mu \phi _{n}^{+}}=\sum_{|m|<|n|/2}\frac{m\phi _{m}}{\psi
_{n}}\Delta _{n-m}^{(2)}+\sum_{|m|\geq |n|/2}\frac{m\phi _{m}}{\psi _{n}}%
\Delta _{n-m}^{(2)}:=\sigma _{1}+\sigma _{2}.  \label{r7}
\end{equation}%
First,%
\begin{equation*}
|\sigma _{1}|\leq \frac{c|n|^{\delta -\alpha }}{\psi _{n}}%
\sum_{|m|<|n|/2}|m|\phi _{m}\backsim c|n|^{\delta -\beta }\rightarrow 0,%
\text{ as }n\rightarrow \pm \infty ,
\end{equation*}%
and by dominated convergence, which applies because $|\frac{\psi _{n-m}}{%
\psi _{n}}\Delta _{m}^{(2)}|\leq c|\Delta _{m}^{(2)}|$ on the range, 
\begin{equation*}
\sigma _{2}=\sum_{|n-m|\geq |n|/2}\frac{(n-m)\phi _{n-m}}{\psi _{n}}\Delta
_{m}^{(2)}\rightarrow \left( 
\begin{array}{ccc}
\sum_{-\infty }^{\infty }\Delta _{m}^{(2)}=\frac{1}{\mu ^{2}} & \text{as} & 
n\rightarrow \infty , \\ 
0 & \text{as} & n\rightarrow -\infty ,%
\end{array}%
\right.
\end{equation*}%
where we have used (\ref{r0}) in the second part, and the result follows.We
continue the proof of (ii) by noting that the Fourier transform of $E_{k,n}$
is given by 
\begin{eqnarray*}
\hat{E}_{k}(t) &=&\hat{u}(t)-\frac{1}{\mu (1-e^{it})}\{1+(1-\hat{\phi}%
(t))+\cdots (1-\hat{\phi}(t))^{k}\} \\
&=&\frac{(1-\hat{\phi}(t))^{k}}{\mu (1-e^{it})}\cdot \frac{1}{\hat{\phi}(t)}%
=-\psi _{k}(t)\hat{\Delta}(t),
\end{eqnarray*}%
where we have used (\ref{r24}). Thus 
\begin{equation}
E_{k,n}=-\sum_{m=-\infty }^{\infty }\Delta (m)\Psi _{n-m}^{(k+1)},\text{ }%
n=0,\pm 1,\pm 2,\cdots .  \label{r13}
\end{equation}%
and we will show that 
\begin{equation}
\lim_{n\rightarrow \infty }\frac{E_{r^{\ast }-1},n}{\Phi _{n}^{r^{\ast }-1}}%
=c(r^{\ast },\beta ),  \label{r14}
\end{equation}%
which implies our claim. We split the sum at $:=[n/2]$; we know that $\Psi
^{(r^{\ast })}$ is regularly varying so for $|m|\leq n^{\ast }\ $%
\begin{equation*}
\frac{|\Delta _{m}\Psi _{n-m}^{(r^{\ast })}|}{|\Psi _{n}^{(r^{\ast })}|}\leq
c|\Delta _{m}|\text{ and, for fixed }m,\text{ }\frac{\Delta _{m}\Psi
_{n-m}^{(r^{\ast })}}{\Psi _{n}^{(r^{\ast })}}\rightarrow \Delta _{m}.\text{ 
}
\end{equation*}%
Since $\sum_{-\infty }^{\infty }|\Delta _{m}|<\infty $ and $\sum_{-\infty
}^{\infty }\Delta _{m}=-\mu ^{-1}$ we see that%
\begin{equation*}
\sum_{|m|\in \lbrack 0,n^{\ast }]\cup \lbrack 3n^{\ast },\infty )}\Delta
_{m}\Psi _{n-m}^{(r^{\ast })}\backsim -\frac{\Psi _{n}^{(r^{\ast })}}{\mu }.
\end{equation*}%
But, using Proposition \ref{B}%
\begin{equation*}
\sum_{|m|\in (n^{\ast },3n^{\ast })}|\Delta _{m}\Psi _{n-m}^{(r^{\ast
})}|\leq c\phi _{n}^{+}\sum_{|m|\in (n^{\ast },3n^{\ast })}|\Psi
_{m}^{(r^{\ast })}|\leq c\phi _{n}^{+}
\end{equation*}%
so the result follows.

To establish (iii), we write $\phi (n)=L(n)/n^{2}$ for $n\geq 1$ and we have
that $\mu _{2}(n)=\sum_{1}^{n}m^{-1}L(m)$ is slowly varying with $L(n)/\mu
_{2}(n)\rightarrow 0,$ so that as $n\rightarrow \infty $ 
\begin{equation*}
\overline{\Phi }(n)^{2}\backsim \left( \frac{L(n)}{n}\right) ^{2}=\phi
_{n}L(n)=o(\phi _{n}\mu _{2}(n)).
\end{equation*}%
With a fixed $\delta \in (0,1/2),$ recalling that $\tsum\nolimits_{-\infty
}^{\infty }\phi _{m}=1,$ we write%
\begin{equation}
\overline{\Psi }_{n}=\overline{\Phi }(n)-\left( \phi \ast \overline{\Phi }%
\right) _{n}=\tsum\nolimits_{-\infty }^{\infty }\phi _{m}\left( \overline{%
\Phi }(n)-\overline{\Phi }(n-m)\right) .
\end{equation}%
For a fixed $\delta \in (0,1)$ it is clear that the contribution from $%
|m|>n\delta $ is $O(\overline{\Phi }(n)^{2}),$ and as $\delta \downarrow 0$%
\begin{eqnarray*}
\tsum\nolimits_{1}^{\delta n}\phi _{m}\left( \overline{\Phi }(n)-\overline{%
\Phi }(n-m)\right) &=&\phi _{n}(1+o(1))\tsum\nolimits_{1}^{\delta n}m\phi
_{m}\backsim \phi _{n}\mu _{2}(n), \\
\tsum\nolimits_{-\delta n}^{0}\phi _{m}\left( \overline{\Phi }(n)-\overline{%
\Phi }(n-m)\right) &=&o(\phi _{n}\mu _{2}(n)),
\end{eqnarray*}%
and it follows that $\overline{\Psi }(n)\backsim \phi (n)\mu _{2}(n).$ So we
know that $\overline{\Psi }(n)$ $\in RV(-2),$ and now we can repeat the
argument used in the case $\kappa \beta \in (1/2,1)$ to establish (iii).
\end{proof}

\section{The absolutely continuous case}

Unlike the lattice case, apart from assuming that $F$ has a density $f,$ we
need to make assumptions on $\hat{f}(\theta ):=\int_{-\infty }^{\infty
}f(y)e^{iy\theta }dy$ which allow to use inversion theorems. The appropriate
one is:

\textbf{Assumption} 
\begin{equation}
\text{For some }p\geq 1\text{ }\hat{f}\in L^{p}.  \label{3.0}
\end{equation}

\begin{lemma}
Write $N=\left\lfloor p\right\rfloor +2,$ and denote the density of $S_{n}$
by $f_{n},$ with $f_{1}=f.$ Then $U$ has a density $u$ which has the
representation%
\begin{equation}
u(x)=\frac{1}{\mu }\boldsymbol{1}_{[0,\infty
)}+\sum_{1}^{N}f_{n}(x)+u_{0}(x),  \label{3.2}
\end{equation}%
where $u_{0}$ is differentiable and%
\begin{equation}
xu_{0}^{\prime }(x)\rightarrow 0\text{ as }|x|\rightarrow \infty .
\label{3.1}
\end{equation}
\end{lemma}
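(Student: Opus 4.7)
The plan is to represent $u_0$ as the inverse Fourier transform of an explicit, well-controlled function, by stripping the first $N{+}1$ terms off the geometric series for $\hat u(\theta)=1/(1-\hat f(\theta))$ and absorbing the $1/\theta$-singularity at the origin into the $\mu^{-1}\mathbf{1}_{[0,\infty)}$ piece. Distributionally,
\[
\hat u(\theta)=\sum_{n=0}^{N}\hat f(\theta)^{n}+\frac{\hat f(\theta)^{N+1}}{1-\hat f(\theta)},
\]
and near $\theta=0$ the tail behaves like $-(i\mu\theta)^{-1}$, which is (up to a Dirac at $0$) the distributional transform of $\mu^{-1}\mathbf{1}_{[0,\infty)}$. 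So (\ref{3.2}) is obtained by inverting
\[
\hat u_{0}(\theta):=\frac{\hat f(\theta)^{N+1}}{1-\hat f(\theta)}+\frac{1}{i\mu\theta},
\]
interpreted appropriately across zero.

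The first step is to check integrability. Since $|\hat f|\le 1$ and $N+1>p$, we have $|\hat f|^{N+1}\le|\hat f|^{p}\in L^{1}$, so $f_{N+1}$ (and every subsequent $f_{n}$) is continuous and bounded, delivering the density-existence claim. On $\{|\theta|\ge\delta\}$ the function $\hat f^{N+1}/(1-\hat f)$ lies in $L^{1}$ because $|1-\hat f|$ is bounded below there (by absolute continuity of $f$ and Riemann-Lebesgue), and near $\theta=0$ the subtraction of $-(i\mu\theta)^{-1}$ is exactly right, using $1-\hat f(\theta)=-i\mu\theta+o(\theta)$, so that $\hat u_{0}$ is locally bounded at the origin.

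For $xu_{0}'(x)\to 0$ I would follow the template of Proposition \ref{A}. Starting from $2\pi u_{0}'(x)=\int e^{-i\theta x}(-i\theta)\hat u_{0}(\theta)\,d\theta$, integration by parts in $\theta$ gives
\[
2\pi\,ix\,u_{0}'(x)=\int e^{-i\theta x}\,\tfrac{d}{d\theta}\bigl[i\theta\,\hat u_{0}(\theta)\bigr]\,d\theta,
\]
so Riemann-Lebesgue yields the claim once the integrand is shown to be $L^{1}$. Near $\theta=0$ the two $\theta^{-2}$ singularities arising from differentiating $\hat f^{N+1}/(1-\hat f)$ and from differentiating $1/(i\mu\theta)$ cancel at leading order, and the residue is $o(|\theta|^{\delta-1})$ for some $\delta\in(0,\alpha-1)$, thanks to the refined expansion $\hat f(\theta)=1+i\mu\theta+o(|\theta|^{1+\delta})$ (valid because $\alpha>1$ supplies $E|X|^{1+\delta}<\infty$). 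Away from the origin $(\hat f^{N+1})'=(N+1)\hat f^{N}\hat f'\in L^{1}$, since $\hat f^{N}\in L^{1}$ and $|\hat f'|\le E|X|$, so the whole integrand is integrable.

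The main obstacle is the local analysis at $\theta=0$: after differentiation one needs the $\theta^{-2}$ singularities to cancel exactly and the residue to be integrable, and this is where the $(1+\delta)$-moment provided by $\alpha>1$ enters in an essential way. Everything else --- density existence, integrability at infinity, the $L^{1}_{\mathrm{loc}}$ property of $\hat u_{0}$ --- is routine given the choice $N\ge\lfloor p\rfloor+2$, which provides the extra convolution power needed to put $\hat f^{N+1}$ and $(\hat f^{N+1})'$ in $L^{1}$.
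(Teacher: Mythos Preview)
Your proposal is correct and follows the same route as the paper: a Fourier representation of $u_{0}$, differentiation under the integral, integration by parts in $\theta$, and Riemann--Lebesgue. The paper is simply more economical. It quotes Isozaki's Lemma~2.3 for the decomposition (3.2) together with the explicit formula
\[
u_{0}(x)=\frac{1}{2\pi}\int_{-\infty}^{\infty}\frac{e^{-ix\theta}\hat f(\theta)^{N+1}}{1-\hat f(\theta)}\,d\theta,
\]
and then observes that the derivative
\[
u_{0}'(x)=\frac{-i}{2\pi}\int_{-\infty}^{\infty}\frac{e^{-ix\theta}\,\theta\,\hat f(\theta)^{N+1}}{1-\hat f(\theta)}\,d\theta
\]
already has an absolutely integrable integrand, because the extra factor $\theta$ removes the simple pole at the origin. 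Thus your explicit subtraction of $-(i\mu\theta)^{-1}$, while perfectly legitimate for making $\hat u_{0}$ itself locally bounded, is not needed at the level of $u_{0}'$: the paper works directly with $\theta\hat f^{N+1}/(1-\hat f)$ and never introduces it.

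One small inaccuracy worth cleaning up: once you pass to $i\theta\,\hat u_{0}(\theta)$, your correction term $1/(i\mu\theta)$ becomes the constant $1/\mu$, whose $\theta$-derivative is zero. So there is no ``cancellation of $\theta^{-2}$ singularities'' to carry out; the single quantity
\[
\frac{d}{d\theta}\left[\frac{\theta\,\hat f(\theta)^{N+1}}{1-\hat f(\theta)}\right]
\]
is on its own $o(|\theta|^{\delta-1})$ near $\theta=0$, by exactly the moment expansion $1-\hat f(\theta)=-i\mu\theta+o(|\theta|^{1+\delta})$, $\hat f'(\theta)=i\mu+o(|\theta|^{\delta})$ that you invoke. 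With that correction your argument and the paper's coincide.
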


\begin{proof}
In Lemma 2.3 of \cite{I} the representation (\ref{3.2}) is given together
with the expression%
\begin{equation}
u_{0}(x)=\frac{1}{2\pi }\int_{-\infty }^{\infty }\frac{e^{-ix\theta }\hat{f}%
(\theta )^{N+1}}{1-\hat{f}(\theta )}d\theta .  \label{3.4}
\end{equation}%
Since our assumption implies $\inf_{|\theta |\geq 1}|1-\hat{f}(\theta )|>0,$
it follows easily that%
\begin{equation}
u_{0}^{\prime }(x)=\frac{-i}{2\pi }\int_{-\infty }^{\infty }\frac{%
e^{-ix\theta }\theta \hat{f}(\theta )^{N+1}}{1-\hat{f}(\theta )}d\theta ,
\label{3.5}
\end{equation}%
and integration by parts and an appeal to the Riemann-Lebesgue Lemma gives (%
\ref{3.1}).
\end{proof}

In this scenario our aim is to give an asymptotic expansion for $u_{0}(x)$
as $|x|\rightarrow \infty ,$ the analogue of $\Delta _{n}$ is $u_{0}^{\prime
}(x),$ and (\ref{3.1}) is the analogue of Proposition 6.

A further consequence of $\inf_{|\theta |\geq 1}|1-\hat{f}(\theta )|>0$ is
that it makes it plausible that $u_{0}(x)$ can be well approximated for
large $|x|$ by%
\begin{equation*}
\frac{1}{2\pi }\int_{-1}^{1}\frac{e^{-ix\theta }d\theta }{1-\hat{f}(\theta )}%
.
\end{equation*}
This programme has been carried out to some extent in \cite{I}. In Theorem
1.1 therein, assuming that $\mu \in (0,\infty )$ and $E|S_{1}|^{\alpha
}<\infty $ for some $\alpha \in (3/2,2),$ two terms in this expansion are
identified. Their asymptotic behaviours are not discussed, but specialising
to the asymptotically stable case would yield the obvious continuous
analogues of our results for this range of $\alpha $ : it should be noted
that there are only two terms for these values of $\alpha .$

Although we will not give all details, in fact there are analogues of all
our discrete results. To state them we need to introduce probability
densities $\phi ^{\pm }$ by 
\begin{equation*}
\phi ^{+}(y)=\left\{ 
\begin{array}{ccc}
\mu _{+}^{-1}\overline{F}(y) & \text{for} & y\geq 0, \\ 
0 & \text{for} & y<0,%
\end{array}%
\right.
\end{equation*}%
and%
\begin{equation*}
\phi ^{-}(y)=\left\{ 
\begin{array}{ccc}
-\mu _{-}^{-1}F(y) & \text{for} & y<0, \\ 
0 & \text{for} & y\geq 0,%
\end{array}%
\right.
\end{equation*}%
and set%
\begin{equation*}
\phi (y)=\frac{\mu _{+}\phi ^{+}(y)}{\mu }+\frac{\mu _{-}\phi ^{-}(y)}{\mu }%
\text{.}
\end{equation*}%
Note that $\phi $ is negative on the negative axis, but $\tint\nolimits_{-%
\infty }^{\infty }\phi (y)dy=1.$ Moreover, a careful summation by parts
shows that with 
\begin{equation*}
\hat{\phi}(t):=\tint\nolimits_{-\infty }^{\infty }\phi (y)e^{iyt}dy=\frac{1-%
\hat{f}(t)}{it\mu }.
\end{equation*}%
If we denote $u_{0}^{\prime }(y)$ by $\Delta (y)$ its fourier transform is 
\begin{equation*}
\hat{\Delta}(t)=-it\hat{u}(t)=\frac{-it}{1-\hat{f}(t)}=\frac{-1}{\mu \hat{%
\phi}(t)},
\end{equation*}%
and differentiating gives%
\begin{equation}
\hat{\Delta}^{\prime }(t)=\frac{\hat{\phi}^{\prime }(t)}{\mu \hat{\phi}%
(t)^{2}}=\mu \hat{\phi}^{\prime }(t)\hat{\Delta}(t)^{2}.  \label{r60}
\end{equation}%
This is the analogue of (\ref{06}), and a very similar argument yields%
\begin{equation}
\lim_{x\rightarrow \infty }\frac{\Delta (x)}{\phi ^{+}(x)}=\frac{1}{\mu },%
\text{ }\lim_{x\rightarrow -\infty }\frac{\Delta (x)}{\phi ^{+}(x)}=0.
\label{3.3}
\end{equation}%
Next we define%
\begin{equation*}
\overline{\Phi }_{1,t}=\left\{ 
\begin{array}{ccc}
\int_{t}^{\infty }\phi (s)ds & \text{for} & t\geq 0, \\ 
-\int_{-\infty }^{t}\phi (s)ds & \text{for} & t\,<0.%
\end{array}%
\right. \text{ and }\overline{\Phi }_{k+1,t}=\overline{\Phi }_{k,t}-(%
\overline{\Phi }\ast \phi )_{t},\text{ }k\geq 1,
\end{equation*}%
and again we make the standing assumptions that $ES_{1}=\mu \in (0,\infty ),$
$ES_{1}^{2}=\infty ,$ and (\ref{r1}) and (\ref{r0}) hold. Clearly $\overline{%
\Phi }_{1,t}\backsim t\overline{F}(t)/(\beta \mu _{+})\in RV(-\beta )$ as $%
t\rightarrow \infty ,$where $\beta =\alpha -1.$ The analogue of Theorem \ref%
{D} is

\begin{theorem}
\label{E}(i) Under these assumptions if $k\geq 2$ and $k\beta <\alpha $ it
holds that as $t\rightarrow \infty $ 
\begin{equation}
\overline{\Phi }_{k,t}\backsim c(k,\beta )\left( \overline{\Phi }%
_{1,t}\right) ^{k},
\end{equation}

where $c(k,\beta )$ is given by (\ref{r53}).

(ii) If $r^{\ast }=\max (k:k\beta <\alpha )$ write 
\begin{equation*}
u_{0}(t)=\frac{1}{\mu }\left( \overline{\Phi }_{1,t}+\cdots +\overline{\Phi }%
_{r^{\ast },t}+E^{\ast }\right) .
\end{equation*}%
Then 
\begin{equation}
E^{\ast }=o(\left( \overline{\Phi }_{1,t}\right) ^{r^{\ast }})\text{ as }%
t\rightarrow \infty .
\end{equation}

(iii) If $\alpha =2$ define for $t\geq 0$ a slowly varying function by $\mu
_{2}(t)=\int_{0}^{t}y(y)dy$: then%
\begin{equation*}
u_{0}(t)-\frac{\overline{\Phi }(t)}{\mu }\backsim \frac{-\phi (t)\mu _{2}(t)%
}{\mu }\text{ as }t\rightarrow \infty .
\end{equation*}
\end{theorem}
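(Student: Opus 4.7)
The plan is to run the proof of Theorem \ref{D} essentially verbatim, with lattice Fourier series on $[-\pi,\pi]$ replaced by continuous Fourier inversion on $\mathbb{R}$. The role played in the discrete case by Propositions \ref{A} and \ref{B} is already supplied here by (\ref{3.1}) and (\ref{3.3}), while (\ref{r60}) is the continuous analogue of (\ref{06}). The local expansions of Lemma 2 transfer unchanged since they depend only on the behaviour of $\hat f$ near zero: one still has $1-\hat\phi(t)\backsim \mu^{-1}\Gamma(1-\beta)e^{-i\pi\beta/2}r(t)$ and $\hat\phi'(t)\backsim \mu^{-1}\beta\Gamma(1-\beta)e^{-i\pi\beta/2}t^{-1}r(t)$ with $r(t)=\overline{\Phi}_{1,1/t}$.

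For part (i), I would start from the inversion formula
\[
\overline{\Phi}_{k,x}=\frac{1}{2\pi}\int_{-\infty}^{\infty}\psi_k(u)e^{-iux}\,du,\qquad \psi_k(u):=\frac{(1-\hat\phi(u))^k}{iu},
\]
set $\varepsilon=\pi/x$, and split the range into $|u|\leq K\varepsilon$ and $|u|>K\varepsilon$. On the small-$u$ range the computations from the case $k\beta\leq 1$, respectively $k\beta\in(1,\alpha)$, of the lattice proof carry over verbatim and produce the constant $c(k,\beta)$. On the large-$u$ range the lattice argument used periodicity to cancel boundary terms after integration by parts; in the continuous case this cancellation is replaced by decay of $\psi_k$ and $\psi_k'$ at infinity, controlled by the spectral gap $\inf_{|\theta|\geq 1}|1-\hat f(\theta)|>0$ together with $\hat f\in L^p$ (this is effectively what the representation (\ref{3.2}) with $N=\lfloor p\rfloor+2$ was set up to exploit). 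The shift-by-$\varepsilon$ device used when $k\beta\in(1,\alpha)$ still works inside a fixed interval $[-\delta,\delta]$, since it is purely a rewriting of the integrand and requires no periodicity.

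For part (ii), I would first compute the Fourier transform of the remainder. Writing $1/(1-\hat f)=(it\mu\hat\phi)^{-1}$ and truncating the geometric expansion of $1/\hat\phi$ after $r^{\ast}+1$ terms gives, once the resulting pieces are identified with $\mu^{-1}\boldsymbol{1}_{[0,\infty)}$ and $\mu^{-1}\overline{\Phi}_{k,\cdot}$ for $1\leq k\leq r^{\ast}$,
\[
\hat E^{\ast}(t) = -\frac{(1-\hat\phi(t))^{r^{\ast}+1}}{it\,\mu\hat\phi(t)} = \psi_{r^{\ast}+1}(t)\,\hat\Delta(t),
\]
using $\hat\Delta(t)=-1/(\mu\hat\phi(t))$. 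Inverting yields $E^{\ast}(x)=(\Delta\ast\Psi^{(r^{\ast}+1)})(x)$, and the convolution-splitting argument from Theorem \ref{D}(ii) applies directly: the pieces with $|y|\leq x/2$ (where $\Delta\in L^1$ and $\Psi^{(r^{\ast}+1)}(x-y)/\Psi^{(r^{\ast}+1)}(x)\to 1$) together with $|y|\geq 3x/2$ combine to produce the main term $-\mu^{-1}\Psi^{(r^{\ast}+1)}(x)$, while on $x/2<|y|<3x/2$ one bounds $|\Delta(y)|\leq c\phi^{+}(x)$ via (\ref{3.3}) and uses integrability of $\Psi^{(r^{\ast}+1)}$.

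For part (iii) I would follow the corresponding discrete argument. Write
\[
\overline\Psi(t)=\int_{-\infty}^{\infty}\phi(s)\bigl(\overline{\Phi}_{1,t}-\overline{\Phi}_{1,t-s}\bigr)\,ds;
\]
the contribution from $|s|>\delta t$ is $O(\overline{\Phi}_{1,t}^{\,2})=o(\phi(t)\mu_2(t))$, while for $0\leq s\leq\delta t$ the regular variation of $\overline{\Phi}_{1,\cdot}$ gives $\overline{\Phi}_{1,t}-\overline{\Phi}_{1,t-s}\backsim -s\phi(t)$, so that $\int_0^{\delta t}s\phi(t)\phi(s)\,ds\backsim \phi(t)\mu_2(t)$; the $s<0$ part is absorbed by (\ref{r0}). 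Once $\overline\Psi(t)\backsim -\phi(t)\mu_2(t)/\mu\in RV(-2)$ is established, the argument from the case $k\beta\in(1/2,1)$ of part (i) closes the proof. The main technical obstacle throughout will be the bookkeeping of tails on $\mathbb R$: unlike on the compact torus, every integration by parts and every Fubini exchange on $|u|\geq\delta$ must be justified via the spectral gap of $\hat f$ and the hypothesis $\hat f\in L^p$, which is the one place where the argument genuinely differs from its lattice counterpart.
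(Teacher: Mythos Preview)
Your proposal is correct and mirrors the paper's own treatment: the paper does not give a separate proof of Theorem \ref{E} but simply asserts that ``there are analogues of all our discrete results,'' pointing to (\ref{3.1}), (\ref{3.3}) and (\ref{r60}) as the continuous substitutes for Propositions \ref{A}, \ref{B} and the identity (\ref{06}). Your plan follows exactly this route, and you correctly isolate the one place where the continuous argument genuinely diverges from the lattice one---the loss of compactness of the Fourier domain, to be compensated by the spectral gap $\inf_{|\theta|\geq 1}|1-\hat f(\theta)|>0$ and the hypothesis $\hat f\in L^p$---which is precisely the content of the representation (\ref{3.2})--(\ref{3.5}).

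One minor point worth flagging: your computation of $\hat E^{\ast}$ starts from $1/(1-\hat f)$, which is the transform of $u$, whereas the statement of Theorem \ref{E}(ii) is phrased in terms of $u_0$, whose transform carries the extra factor $\hat f^{N+1}$ from (\ref{3.4}). The discrepancy is the finite sum $\sum_{1}^{N}f_n(x)$ in (\ref{3.2}); the paper is silent on how this is absorbed, and you should either show it is $o((\overline\Phi_{1,x})^{r^\ast})$ or, more cleanly, run the convolution argument with $\hat f^{N+1}/(1-\hat f)$ in place of $1/(1-\hat f)$ and note that the extra $\hat f^{N+1}$ only improves the integrability at infinity without altering the behaviour near $u=0$.
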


\section{Other scenarios}

All the cases we have considered have a finite positive mean and infinite
variance and are such that the tail ratio $\rho (x):=F(-x)/\overline{F}(x)$
goes to $0$ as $x\rightarrow \infty .$ An obvious question is what can be
said if we retain all other assumptions but have $\rho (x)\rightarrow \rho
\in (0,1)?$ The only effect this has is that the constants in the crucial
estimate (\ref{r32}) change, and this is reflected in the constant in the
conclusion (\ref{r53}).

In the case that $\rho (x)\rightarrow 0$ it is clear that we need
assumptions on the left-hand tail, and if we assume $F(-x)\in RV(-\alpha )$
as $x\rightarrow \infty $ then again a modified version of (\ref{r53}) will
hold.

\end{document}